\newtheorem{theorem}{Theorem}[section]
\newtheorem{proposition}[theorem]{Proposition}
\newtheorem{lemma}[theorem]{Lemma}
\newtheorem{corollary}[theorem]{Corollary}
\theoremstyle{definition}
\newtheorem{example}[theorem]{Example}
\newcommand{\defiff}{\: :\Longleftrightarrow\:}
\newcommand{\f}{\varphi}
\newcommand{\p}{\psi}
\newcommand{\eps}{\varepsilon}
\newcommand{\e}{{\rm e}}
\newcommand{\jn}{\vee}
\newcommand{\mt}{\wedge}
\newcommand{\m}{\mathbf} 
\newcommand{\N}{{\mathbb N}}
\newcommand{\Z}{{\mathbb Z}}
\newcommand{\Q}{{\mathbb Q}}
\newcommand{\iv}[1]{{#1}^{-1}}
\newcommand{\Si}{\mathrm{\Sigma}}
\newcommand{\Om}{\mathrm{\Omega}}
\newcommand{\vty}[1]{\mathsf{#1}}
\newcommand{\eq}{\approx}
\newcommand{\idq}[2]{\tfrac{#1}{#2}}
\DeclareMathOperator{\Sub}{is}
\newcommand{\set}[1]{\{ #1 \}}
\newcommand\nbd[1]{\protect\nobreakdash#1\hspace{0pt}}
\begin{document}

\begin{frontmatter}

\title{From distributive $\ell$\nbd{-}monoids to $\ell$\nbd{-}groups, and back again}

\author{Almudena Colacito\fnref{thanks1}}
\address{Laboratoire J. A. Dieudonn{\'e}, Universit{\'e} C{\^o}te d'Azur, France}
\ead{almudena.colacito@unice.fr}
\fntext[thanks1]{Supported by Swiss National Science Foundation (SNF) grant  No. P2BEP2\textunderscore195473.}	

\author{Nikolaos Galatos}
\address{Department of Mathematics, University of Denver, 2390 S. York St. Denver, CO 80210, USA}
\ead{ngalatos@du.edu}

\author{George Metcalfe\corref{cor}\fnref{thanks2}}
\address{Mathematical Institute, University of Bern, Sidlerstrasse 5, 3012 Bern, Switzerland}
\ead{george.metcalfe@math.unibe.ch}
\cortext[cor]{Corresponding author}
\fntext[thanks2]{Supported by Swiss National Science Foundation (SNF) grant No. 200021\textunderscore 184693.}	

\author{Simon Santschi}
\address{Mathematical Institute, University of Bern, Sidlerstrasse 5, 3012 Bern, Switzerland}
\ead{simon.santschi@math.unibe.ch}

\begin{abstract}
We prove that an inverse-free equation is valid in the variety $\vty{LG}$ of lattice-ordered groups ($\ell$\nbd{-}groups) if and only if it is valid in the variety $\vty{DLM}$ of distributive lattice-ordered monoids (distributive $\ell$\nbd{-}monoids). This contrasts with the fact that, as proved by Repnitski\u{\i}, there exist inverse-free equations that are valid in all Abelian $\ell$\nbd{-}groups but not in all commutative distributive $\ell$\nbd{-}monoids, and, as we prove here, there exist inverse-free equations that are valid in all totally ordered groups but not in all totally ordered monoids. We also prove that $\vty{DLM}$  has the finite model property and a decidable equational theory, establish a correspondence between the validity of equations in $\vty{DLM}$ and the existence of certain right orders on free monoids, and provide an effective method for reducing the validity of equations in $\vty{LG}$ to the validity of equations in $\vty{DLM}$. 
\end{abstract}

\begin{keyword}
Lattice-ordered groups, distributive lattice-ordered monoids, free groups, free monoids.
\end{keyword}

\end{frontmatter}
 

\section{Introduction}\label{s:introduction}

A {\em lattice-ordered group} ({\em $\ell$\nbd{-}group}) is an algebraic structure $\langle L, \mt, \jn, \cdot, \iv{}, \e \rangle$  such that $\langle L,\cdot, \iv{}, \e \rangle$ is a group, $\langle L, \mt, \jn \rangle$ is a lattice, and the group multiplication preserves the lattice order, i.e., $a \le b$ implies $cad \le cbd$ for all $a,b,c,d \in L$, where  $a\le b\defiff a\mt b = a$. The class of $\ell$\nbd{-}groups forms a variety (equational class) $\vty{LG}$ and admits the following Cayley-style representation theorem:

\begin{theorem}[Holland~\cite{Hol63}]\label{t:holland}
Every $\ell$\nbd{-}group embeds into an $\ell$\nbd{-}group $\m{Aut}(\langle\Om,\le\rangle)$ consisting of the group of order-automorphisms of a totally ordered set (chain) $\langle\Om,\le\rangle$ equipped with the pointwise lattice order. 
\end{theorem}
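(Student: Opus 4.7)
The plan is to use the classical method of right regular representations on coset chains determined by prime convex $\ell$\nbd{-}subgroups. Given a non-trivial $\ell$-group $\m{L}$ and an element $f \in L$ with $f \neq \e$, I would first invoke Zorn's lemma to produce a convex $\ell$-subgroup $H_f$ of $\m{L}$ that is maximal among those not containing $f$ (a so-called \emph{value} of $f$). A standard argument, using the distributivity of the lattice reduct of $\m{L}$, then shows that $H_f$ is \emph{prime}: whenever $a \mt b \in H_f$, either $a \in H_f$ or $b \in H_f$. Primeness ensures that the natural order on the set $\Om_f$ of right cosets $\{H_f a : a \in L\}$, defined by $H_f a \le H_f b$ iff $ha \le b$ for some $h \in H_f$, is a total order.

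Next, I would let $\m{L}$ act on $\langle \Om_f, \le\rangle$ by right multiplication, $(H_f a) \cdot g := H_f(ag)$. This action is a well-defined order-automorphism of the chain, and the map $g \mapsto ((H_f a) \mapsto H_f(ag))$ yields an $\ell$-group homomorphism $\f_f \colon \m{L} \to \m{Aut}(\langle \Om_f, \le\rangle)$; since $f \notin H_f$ we have $H_f \cdot f \neq H_f$, so $\f_f(f) \neq \mathrm{id}$. To assemble these local homomorphisms into a single embedding, I would take $\langle \Om, \le\rangle$ to be the ordinal sum (over some fixed well-ordering of the index set $L \setminus \{\e\}$) of the chains $\langle \Om_f, \le\rangle$, with $\m{L}$ acting componentwise. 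The induced homomorphism $\f \colon \m{L} \to \m{Aut}(\langle \Om, \le\rangle)$ is injective because for each $f \neq \e$ the component $\f_f$ witnesses $\f(f) \neq \mathrm{id}$.

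The main obstacle is the primeness of $H_f$, as everything else is either abstract nonsense (Zorn's lemma, forming the ordinal sum) or a routine verification that the coset action preserves $\cdot$, $\iv{}$, $\mt$, and $\jn$. The primeness proof proceeds by contradiction: assuming $a \mt b \in H_f$ with $a, b \notin H_f$, one reduces to the case $a, b \ge \e$ and observes that by maximality both convex $\ell$-subgroups generated by $H_f \cup \{a\}$ and by $H_f \cup \{b\}$ must contain $f$. The $\ell$-group distributivity laws $x(y \mt z) = xy \mt xz$ and $(x \mt y)z = xz \mt yz$, together with convexity, then force $f$ into the convex $\ell$-subgroup generated by $H_f \cup \{a \mt b\} = H_f$, yielding the desired contradiction.
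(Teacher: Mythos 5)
The paper does not prove this statement: it is quoted as a classical result of Holland~\cite{Hol63}, so there is no in-paper argument to compare against. Your sketch is the standard proof (as in Holland's original paper and in~\cite{AF88,KM94}): represent $\m{L}$ on the totally ordered right coset space of a value $H_f$ of each $f\neq\e$, then glue the components by an ordinal sum. The outline is correct, and you rightly identify the primeness of values as the only step with real content. Two remarks. First, antisymmetry of the coset order already uses convexity of $H_f$ (from $ha\le b$ and $h'b\le a$ one traps $b\iv{a}$ between $h$ and $\iv{(h')}$), and the fact that $g\mapsto\f_f(g)$ preserves $\jn$ pointwise uses totality of the coset order together with the identity $(h\mt\e)(ca\jn cb)\le hca\jn cb$; these are the ``routine verifications'' and they do go through, but they are not entirely free. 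Second, the primeness argument as you state it hides the one genuine lemma: for $a,b\ge\e$ the convex $\ell$\nbd{-}subgroups generated by $H\cup\{a\}$ and $H\cup\{b\}$ intersect in the one generated by $H\cup\{a\mt b\}$, which rests on the fact that orthogonal positive elements have orthogonal powers (so that upper bounds built from $a$ and from $b$ meet down into $H$ when $a\mt b\in H$). You flag this as the obstacle without carrying it out, which is acceptable for a sketch, but that lemma is where the proof actually lives.
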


\noindent
Holland's theorem has provided the foundations for the development of a rich and extensive theory of $\ell$\nbd{-}groups (see~\cite{AF88,KM94} for details and references). In particular, it was proved by Holland~\cite{Hol76} that an equation is valid in $\vty{LG}$ if and only if it is valid in $\m{Aut}(\langle\Q,\le\rangle)$, and by Holland and McCleary~\cite{HM79} that the equational theory of $\vty{LG}$ is decidable.

The inverse-free reduct of any $\ell$\nbd{-}group is a {\em distributive lattice-ordered monoid} ({\em distributive $\ell$\nbd{-}monoid}): an algebraic structure $\langle M, \mt, \jn, \cdot, \e \rangle$ such that $\langle M,\cdot, \e \rangle$ is a monoid, $\langle M, \mt, \jn \rangle$ is a distributive lattice, and the lattice operations distribute over the monoid multiplication, i.e., for all $a,b,c,d\in M$,
\[
a(b\jn c)d = abd \jn acd\quad\text{and}\quad a(b\mt c)d = abd\mt acd.
\]
The class of distributive $\ell$\nbd{-}monoids also forms a variety $\vty{DLM}$ and admits a Cayley-style (or Holland-style) representation theorem:

\begin{theorem}[Anderson and Edwards~\cite{AE84}]\label{t:andersonedwards}
Every distributive $\ell$\nbd{-}monoid embeds into a distributive $\ell$\nbd{-}monoid  $\m{End}(\langle\Om,\le\rangle)$ consisting of the monoid of order-endomorphisms of a chain $\langle\Om,\le\rangle$ equipped with the pointwise lattice order. 
\end{theorem}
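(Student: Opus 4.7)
The plan is to mirror the proof of Holland's theorem (Theorem~\ref{t:holland}), reducing first to the totally ordered case via a subdirect representation and then applying a left-regular action, before gluing the pieces together.

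\emph{Stage 1 (totally ordered case).} If $N$ is a distributive $\ell$\nbd{-}monoid whose underlying lattice is a chain, define $\lambda \colon N \to \m{End}(\langle N, \le\rangle)$ by $\lambda_a(x) := ax$. Each $\lambda_a$ is an order-endomorphism since the order is total and multiplication is isotone; $\lambda$ preserves $\cdot$ by associativity, preserves $\jn$ and $\mt$ by the defining distributivity identities of $\vty{DLM}$, and is injective because $\lambda_a(\e) = a$.

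\emph{Stage 2 (subdirect decomposition).} Show that every distributive $\ell$\nbd{-}monoid $M$ is a subdirect product of totally ordered distributive $\ell$\nbd{-}monoids. Given $a, b \in M$ with $a \not\le b$, use Zorn's lemma to obtain a congruence $\theta$ of $M$ maximal among those with $a/\theta \not\le b/\theta$, and then use the distributivity of $\cdot$ over $\jn$ and $\mt$ to rule out the existence of two incomparable elements in $M/\theta$: any such pair $u,v$ would yield two strictly larger congruences (identifying $u$, resp.\ $v$, with $u \jn v$), and distributivity should let one combine the consequent collapses of $(a,b)$ back down to $\theta$ itself.

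\emph{Stage 3 (gluing).} Given a subdirect embedding $M \hookrightarrow \prod_{i \in I} N_i$ with each $N_i$ totally ordered and embeddings $\iota_i \colon N_i \hookrightarrow \m{End}(\langle N_i, \le\rangle)$ from Stage~1, fix a linear order on $I$ and let $\Om$ be the ordinal sum of the chains $\langle N_i, \le\rangle$. For $m \in M$ with image $(m_i)_{i\in I}$, define its action on $\Om$ block-wise by $m \cdot x := m_i x$ for $x \in N_i$. Each block is preserved by an order-preserving map, so the induced function on $\Om$ is an order-endomorphism, and the assignment $m \mapsto (x \mapsto m \cdot x)$ is the required embedding of distributive $\ell$\nbd{-}monoids.

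The main obstacle is Stage~2. For $\ell$\nbd{-}groups the analogous subdirect decomposition into totally ordered groups is classical (Lorenzen) and relies essentially on the presence of inverses; here one has to replace that argument by a more delicate exploitation of the distributive laws, which is essentially the only extra tool available beyond the lattice structure. Stages~1 and~3, by contrast, are straightforward modifications of standard Cayley-style constructions.
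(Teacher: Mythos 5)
There is a genuine gap, and it is fatal: the claim in Stage~2 is false. (Note also that the paper does not prove Theorem~\ref{t:andersonedwards} at all --- it is quoted from~\cite{AE84} --- so there is no in-paper proof to match; your argument must stand on its own.) A distributive $\ell$\nbd{-}monoid is a subdirect product of totally ordered monoids exactly when it is \emph{representable}, and the paper itself shows (Proposition~\ref{p:representabledlm}) that the representable algebras form a \emph{proper} subvariety of $\vty{DLM}$, axiomatized by $z_1y_1z_2 \mt w_1y_2w_2 \le z_1y_2z_2 \jn w_1y_1w_2$. This equation fails in some $\ell$\nbd{-}group (non-representable $\ell$\nbd{-}groups exist, e.g.\ $\m{Aut}(\langle\Q,\le\rangle)$), hence it fails in the inverse-free reduct of that $\ell$\nbd{-}group, which is a distributive $\ell$\nbd{-}monoid admitting no subdirect decomposition into chains. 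So no amount of ``delicate exploitation of the distributive laws'' can rescue Stage~2; the maximal congruence $\theta$ you construct simply need not have a totally ordered quotient. The analogy you invoke is also misremembered: the Lorenzen-style subdirect decomposition into totally ordered groups works only for representable $\ell$\nbd{-}groups, and Holland's proof of Theorem~\ref{t:holland} does not use it --- it lets the group act on the chain of right cosets of a prime convex $\ell$\nbd{-}subgroup, a chain that is \emph{not} a quotient $\ell$\nbd{-}group.

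The correct analogue, and the route of~\cite{AE84} (mirrored in Section~\ref{s:representable_distributive_lmonoids} of the paper), separates $a\not\le b$ by a prime lattice ideal $I$ with $b\in I$, $a\notin I$, and then builds from $I$ a \emph{chain on which $\m{M}$ acts by right multiplication}: one extends the $I$-induced partial data to a total right-invariant preorder on $M$ (cf.\ the role of Lemma~\ref{l:fromendtoaut} in the free setting) and lets $\m{M}$ act on the resulting quotient chain; the target chain is a quotient \emph{set}, not a quotient $\ell$\nbd{-}monoid. Your Stages~1 and~3 are essentially sound and would combine with such a separation step, with one convention to fix in Stage~1: with left multiplication $\lambda_a(x)=ax$ you get $\lambda_{ab}=\lambda_a\circ\lambda_b$ only under the ``apply on the right'' composition order, which is why the paper uses the right regular action $x\mapsto xa$ (see the remark before Proposition~\ref{p:notorderable}); with standard composition your $\lambda$ is an anti-homomorphism.
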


Despite the obvious similarity of Theorem~\ref{t:andersonedwards} to Theorem~\ref{t:holland}, the precise nature of the relationship between the varieties of distributive $\ell$\nbd{-}monoids and $\ell$\nbd{-}groups has remained unclear. It was proved by Repnitski\u{\i} in~\cite{Rep83} that the variety of commutative distributive $\ell$\nbd{-}monoids does not have the same equational theory as the class of inverse-free reducts of Abelian $\ell$\nbd{-}groups, but the decidability of its equational theory remains an open problem. In this paper, we prove the following results for the general (noncommutative) case:

\newtheorem*{t:fmp}{Theorem~\ref{t:fmp}}
\begin{t:fmp}
The variety of distributive $\ell$\nbd{-}monoids has the finite model property.\footnote{Recall that a variety $\vty{V}$ has the {\em (strong) finite model property} if an equation (respectively, quasiequation) is valid in $\vty{V}$ if and only if it is valid in the finite members of $\vty{V}$.} More precisely, an equation is valid in all distributive $\ell$\nbd{-}monoids if and only if it is valid in all distributive $\ell$\nbd{-}monoids of order-endomorphisms of a finite chain.
\end{t:fmp}

\newtheorem*{c:decidable}{Corollary~\ref{c:decidable}}
\begin{c:decidable}
The equational theory of distributive $\ell$\nbd{-}monoids is decidable.
\end{c:decidable}

\newtheorem*{t:eqtheory}{Theorem~\ref{t:eqtheory}}
\begin{t:eqtheory}
An inverse-free equation is valid in the variety of $\ell$\nbd{-}groups if and only if it is valid in the variety of distributive $\ell$\nbd{-}monoids. 
\end{t:eqtheory}

Theorem~\ref{t:eqtheory} shows, by way of Birkhoff's variety theorem~\cite{Bir35}, that distributive $\ell$\nbd{-}monoids are precisely the homomorphic images of the inverse-free subreducts of $\ell$\nbd{-}groups. It also allows us, using a characterization of valid $\ell$\nbd{-}group equations given in~\cite{CM19}, to relate the validity of equations in distributive $\ell$\nbd{-}monoids to the existence of certain right orders on free monoids. As a notable consequence of this correspondence, we obtain:

\newtheorem*{c:rofreemonfreegr}{Corollary~\ref{c:rofreemonfreegr}}
\begin{c:rofreemonfreegr}
Every right order on the free monoid over a set $X$ extends to a right order on the free group over $X$.
\end{c:rofreemonfreegr}

To check whether an equation is valid in all distributive $\ell$\nbd{-}monoids, it suffices, by Theorem~\ref{t:eqtheory}, to check the validity of this same equation in all $\ell$\nbd{-}groups. We prove here that a certain converse also holds, namely:

\newtheorem*{t:inversefree}{Theorem~\ref{t:inversefree}}
\begin{t:inversefree}
Let $\eps$ be any $\ell$\nbd{-}group equation with variables in a set $X$. A finite set of inverse-free equations $\Si$ with variables in $X\cup Y$ for some finite set $Y$ can be effectively constructed such that $\eps$ is valid in all $\ell$\nbd{-}groups if and only if the equations in $\Si$ are valid in all distributive $\ell$\nbd{-}monoids.
\end{t:inversefree}

Finally, we turn our attention to totally ordered groups and totally ordered monoids, that is, $\ell$\nbd{-}groups and distributive $\ell$\nbd{-}monoids with a total lattice order. We show that the variety generated by the class of totally ordered monoids can be axiomatized relative to $\vty{DLM}$ by a single equation (Proposition~\ref{p:representabledlm}). However, analogously to the case of commutative distributive $\ell$\nbd{-}monoids and unlike the case of $\vty{DLM}$, we prove:

\newtheorem*{t:subrvarrep}{Theorem~\ref{t:subrvarrep}}
\begin{t:subrvarrep}
There is an inverse-free equation that is valid in all totally ordered groups, but not in all totally ordered monoids.
\end{t:subrvarrep}

We also exhibit an inverse-free equation that is valid in all finite totally ordered monoids, but not in the ordered group of the integers (Proposition~\ref{p:fmpfails}), witnessing the failure of the finite model property for the variety of commutative distributive $\ell$\nbd{-}monoids and the varieties generated by totally ordered monoids and inverse-free reducts of totally ordered groups (Corollary~\ref{c:fmpfails}).


\section{From distributive $\ell$\nbd{-}monoids to $\ell$\nbd{-}groups}\label{s:from_distributive_lmonoids_to_lgroups}

In this section, we establish the finite model property for the variety $\vty{DLM}$ of distributive $\ell$\nbd{-}monoids (Theorem~\ref{t:fmp}) and the decidability of its equational theory (Corollary~\ref{c:decidable}). We then prove that an inverse-free equation is valid in $\vty{DLM}$ if and only if it is valid in the variety $\vty{LG}$ of $\ell$\nbd{-}groups (Theorem~\ref{t:eqtheory}). The key tool for obtaining these results is the notion of a total preorder on a set of monoid terms that is preserved under right multiplication, which bears some similarity to the notion of a diagram employed in~\cite{HM79}. In particular, the existence of such a preorder satisfying a given finite set of inequalities is related to the validity of a corresponding inverse-free equation in $\vty{DLM}$ or $\vty{LG}$.

Let $X$ be any set. We denote by $\m T_{m}(X)$,  $\m T_{g}(X)$,  $\m T_{d}(X)$, and $\m T_{\ell}(X)$ the term algebras over $X$ for monoids, groups, distributive $\ell$\nbd{-}monoids, and $\ell$\nbd{-}groups, respectively, and by $\m F_{m}(X)$,  $\m F_{g}(X)$,  $\m F_{d}(X)$, and $\m F_{\ell}(X)$, the corresponding free algebras, assuming for convenience that $F_{m}(X)\subseteq T_{m}(X)$, $F_{g}(X)\subseteq T_{g}(X)$, $F_{d}(X)\subseteq T_{d}(X)$, and $F_{\ell}(X)\subseteq T_{\ell}(X)$.  Given a set of ordered pairs of monoid terms $S\subseteq F_{m}(X)^2$, we define the set of {\em initial subterms} of $S$:
\[
\Sub(S) := \set{u\in F_m(X) \mid \exists s,t\in F_{m}(X)\colon\langle us,t\rangle\in S \,\text{ or }\,\langle s,ut\rangle\in S}.
\]
Note in particular that $s,t\in\Sub(S)$ for each $\langle s,t\rangle\in S$.

Recall now that a {\em preorder} $\preceq$ on a set $P$ is a binary relation on $P$ that is reflexive and transitive. We write $a\prec b$ to denote that $a\preceq b$ and $b\not\preceq a$, and call $\preceq$ \emph{total} if $a \preceq b$ or $b \preceq a$ for all $a,b\in P$.  
Let $\preceq$ be a preorder on a set of monoid terms $P\subseteq F_{m}(X)$. We say that  $\preceq$ is {\em right-$X$-invariant} if for all $x\in X$, whenever $u\preceq v$ and $ux,vx\in P$, also $ux \preceq vx$, and {\em strictly right-$X$-invariant} if it is right $X$-invariant and for all $x\in X$, whenever  $u\prec v$ and $ux,vx\in P$, also $ux \prec vx$. 

Following standard practice for $\ell$\nbd{-}groups, we write $(p)f$ for the value of a (partial) map $f\colon\Om\to\Om$ defined at $p \in \Om$. As a notational aid, we also often write $\f_r$ to denote the value of a (partial) map $\f$ defined for some element $r$.

\begin{lemma}\label{l:preorderimpliesnotvalid}
Let $S\subseteq F_m(X)^2$ be a finite set of ordered pairs of monoid terms and let $\preceq$ be a total right-$X$-invariant preorder on $\Sub(S)$ satisfying $s\prec t$ for each $\langle s,t\rangle\in S$. 

\begin{enumerate}
\item[\rm (a)] There exists a chain $\langle\Om,\le\rangle$ satisfying $\lvert\Om\rvert \le \lvert\Sub(S)\rvert$, a homomorphism $\f\colon\m{T}_d(X)\to\m{End(\langle\Om,\le\rangle)}$, and some $p\in\Om$ such that $(p)\f_s < (p)\f_t$ for each $\langle s,t\rangle\in S$. 

\item[\rm (b)] If $\preceq$ is also strictly right-$X$-invariant, then there exists a homomorphism $\p\colon\m{T}_\ell(X)\to\m{Aut(\langle\Q,\le\rangle)}$ and some $q\in\Q$ such that $(q)\p_s < (q)\p_t$ for each $\langle s,t\rangle\in S$. 
\end{enumerate}
\end{lemma}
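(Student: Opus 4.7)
The plan is to realise $\Sub(S)$ as a chain by quotienting by the equivalence relation $u \sim v$ iff $u \preceq v$ and $v \preceq u$. Writing $[u]$ for the equivalence class of $u$, I set $\Om := \Sub(S)/{\sim}$ and order it by $[u] \le [v]$ iff $u \preceq v$; clearly $\lvert\Om\rvert \le \lvert\Sub(S)\rvert$. A preliminary observation is that $\Sub(S)$ is closed under prefixes: if $u = x_1 \cdots x_n \in \Sub(S)$, inspecting the two clauses in the definition of $\Sub(S)$ shows that each prefix $x_1 \cdots x_k$ (including the empty prefix $\e$) also lies in $\Sub(S)$.

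For part~(a), I fix $p := [\e]$. For each $x \in X$, the map $f_x \colon [u] \mapsto [ux]$, defined on $\set{[u] \mid ux \in \Sub(S)} \subseteq \Om$, is well-defined and order-preserving by right-$X$-invariance. I extend $f_x$ to a total order-preserving $\bar f_x \colon \Om \to \Om$ in the standard way: send any $a$ below $\min\mathrm{dom}(f_x)$ to $f_x(\min\mathrm{dom}(f_x))$, and otherwise send $a$ to $f_x(b)$ with $b$ the greatest element of $\mathrm{dom}(f_x)$ satisfying $b \le a$; if $\mathrm{dom}(f_x)$ is empty, set $\bar f_x := \mathrm{id}_\Om$. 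Then $\bar f_x \in \End(\langle\Om,\le\rangle)$, and by the universal property of the absolutely free term algebra $\m T_d(X)$ there is a unique homomorphism $\f \colon \m T_d(X) \to \m{End}(\langle\Om,\le\rangle)$ with $\f(x) = \bar f_x$ for each $x\in X$. A short induction on the length of $u = x_1 \cdots x_n \in \Sub(S)$, using prefix-closure to guarantee that $[x_1\cdots x_k] \in \mathrm{dom}(f_{x_{k+1}})$ at each step, yields $(p)\f_u = [u]$; applied to each $\langle s,t\rangle \in S$ this gives $(p)\f_s = [s] < [t] = (p)\f_t$, as required.

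For part~(b), I embed $\Om$ order-preservingly into $\Q$ (any embedding works since $\Om$ is finite). Under strict right-$X$-invariance, each partial map $f_x$, transported to a finite subset of $\Q$, is strictly increasing on its domain, and hence a partial order-isomorphism between finite subsets of $\Q$. By the standard back-and-forth construction (equivalently, by the $\aleph_0$-categoricity of the countable dense linear order without endpoints), each such $f_x$ extends to some $\hat f_x \in \Aut(\langle\Q,\le\rangle)$. The universal property of $\m T_\ell(X)$ again produces a homomorphism $\p \colon \m T_\ell(X) \to \m{Aut}(\langle\Q,\le\rangle)$ with $\p(x) = \hat f_x$, and the same induction as in part~(a), starting from $q \in \Q$ the image of $[\e]$, yields $(q)\p_s < (q)\p_t$ for every $\langle s,t\rangle \in S$.

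The main technical points are the prefix-closure of $\Sub(S)$ and the inductive identity tracking the basepoint along monoid terms; the two extensions of partial order-preserving maps to full endomorphisms of $\langle\Om,\le\rangle$, respectively automorphisms of $\langle\Q,\le\rangle$, are routine, relying on the finiteness of $\Om$ and on the homogeneity of $\Q$.
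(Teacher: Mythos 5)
Your proposal is correct and follows essentially the same route as the paper: quotient $\Sub(S)$ by the symmetrization of $\preceq$ to get a finite chain, read off partial order-endomorphisms $[u]\mapsto[ux]$ from (strict) right-$X$-invariance, extend them to total endomorphisms of $\Om$ (resp.\ automorphisms of $\Q$ via homogeneity), and track the basepoint $[\e]$ along terms. Your explicit verification of prefix-closure of $\Sub(S)$ and of the extension of the partial maps only spells out steps the paper leaves implicit.
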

\begin{proof}
For (a), we let $[u] := \set{v\in \Sub(S) \mid u\preceq v \textrm{ and }v\preceq u}$ for each $u\in\Sub(S)$ and define $\Om := \set{[u] \mid u\in \Sub(S)}$, noting that  $\lvert\Om\rvert \le \lvert\Sub(S)\rvert$. If $[u]=[u']$, $[v]=[v']$, and $ u\preceq v$, then  $u'\preceq v'$, so we can define for $[u],[v]\in\Om$,
\[
[u]\leq [v]\defiff u\preceq v.
\]  
Clearly, $\leq$ is a total order on $\Om$ and $[s] < [t]$ for each $\langle s,t\rangle\in S$. Moreover, if  $[u],[v]\in\Om$, $x\in X$, and $ux,vx \in \Sub(S)$, then, using the right-$X$-invariance of $\preceq$, 
\[
[u]\le[v] \:\Longrightarrow\: [ux]\le[vx].
\]
In particular, if $[u]=[v]\in\Om$, $x\in X$, and $ux,vx \in \Sub(S)$, then $[ux]=[vx]$. Hence for each $x\in X$, we obtain a partial order-endomorphism $\tilde{\f}_x\colon\Om\to\Om$ of $\langle\Om,\le\rangle$ by defining $([u])\tilde{\f}_x:=[ux]$ whenever $[u]\in\Om$ and $ux \in \Sub(S)$. Moreover, each of these partial maps $\tilde{\f}_x$ extends to an order-endomorphism $\f_x\colon\Om\to\Om$  of $\langle\Om,\le\rangle$. Now let  $\f\colon\m{T}_d(X)\to\m{End(\langle\Om,\le\rangle)}$ be the homomorphism extending the assignment $x\mapsto \f_x$. Then $([\e])\f_u = [u]$ for every $u\in \Sub(S)$ and hence $([\e])\f_s < ([\e])\f_t$ for each $\langle s,t\rangle\in S$.

For (b), note that the set $\Om$ defined in (a) is finite and, assuming that $\preceq$ is strictly right-$X$-invariant, the partial order-endomorphisms $\tilde{\f}_x\colon\Om\to\Om$ of $\langle\Om,\le\rangle$ for $x\in X$ are injective. Hence $\langle\Om,\le\rangle$ can be identified with a subchain of $\langle\Q,\le\rangle$ and each $\tilde{\f}_x$ can be extended to an order-automorphism $\p_x\colon\Q\to\Q$ of $\langle\Q,\le\rangle$. As in (a), we obtain a homomorphism $\p\colon\m{T}_\ell(X)\to\m{Aut(\langle\Q,\le\rangle)}$  extending the assignment $x\mapsto \p_x$ such that $([\e])\f_s < ([\e])\f_t$ for each $\langle s,t\rangle\in S$.
\end{proof}

For $s,t\in T_{\ell}(X)$, we write $s \le t$ as an abbreviation for the equation $s\mt{t}\eq{s}$, noting that $s\eq{t}$ is valid in an $\ell$\nbd{-}monoid or $\ell$\nbd{-}group $\m{L}$ if and only if  $s\le{t}$ and $t\le{s}$ are valid in $\m{L}$.  It is easily seen that every $\ell$\nbd{-}group (or $\ell$\nbd{-}monoid) term is equivalent in $\vty{LG}$ (or $\vty{DLM}$) to both a join of meets of group (monoid) terms and a meet of joins of group (monoid) terms. It follows that to check the validity of an (inverse-free) equation in $\vty{LG}$ (or $\vty{DLM}$), it suffices to consider equations of the form $\bigwedge_{i=1}^n t_{i}\leq \bigvee_{j=1}^m s_{j}$ where $s_j,t_i\in{F_{g}(X)}$ (or  $s_j,t_i\in{F_{m}(X)}$) for $1\le{i}\le{n}$,  $1\le{j}\le{m}$. The next lemma relates the validity of an inverse-free equation of this form in $\vty{LG}$ or $\vty{DLM}$ to the existence of a total (strictly) right-$X$-invariant preorder on a corresponding set of initial subterms.

\begin{lemma}\label{l:notvalidimpliespreorder}
Let $\eps=(\bigwedge_{i=1}^n t_{i}\leq \bigvee_{j=1}^m s_{j})$ where $s_j,t_i\in{F_{m}(X)}$ for $1\le{i}\le{n}$,  $1\le{j}\le{m}$, and let $S := \{\langle s_j, t_i\rangle\in F_{m}(X)^2\mid 1\le i\le n,\,1\le j\le m\}$.
\begin{enumerate}
\item[\rm (a)] $\vty{DLM}\models\eps$ if and only if  there is no total right-$X$-invariant preorder $\preceq$ on $\Sub(S)$ satisfying $s\prec t$ for each $\langle s,t\rangle\in S$.
\item[\rm (b)] $\vty{LG}\models\eps$ if and only if there is no total strictly right-$X$-invariant preorder $\preceq$ on $\Sub(S)$ satisfying $s\prec t$ for each $\langle s,t\rangle\in S$.
\end{enumerate}
\end{lemma}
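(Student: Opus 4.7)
The plan is to prove each of (a) and (b) as an ``if and only if'' via two complementary directions, both pivoting on the concrete representation of distributive $\ell$\nbd{-}monoids (respectively $\ell$\nbd{-}groups) on chains.

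For the direction showing that existence of the preorder implies failure of $\eps$, I would appeal directly to Lemma~\ref{l:preorderimpliesnotvalid}. Given a total right-$X$-invariant preorder $\preceq$ on $\Sub(S)$ with $s\prec t$ for each $\langle s,t\rangle\in S$, part~(a) of that lemma supplies a chain $\langle\Om,\le\rangle$, a homomorphism $\f\colon\m{T}_d(X)\to\m{End}(\langle\Om,\le\rangle)$, and a point $p\in\Om$ with $(p)\f_{s_j}<(p)\f_{t_i}$ for all $i,j$. Totality of $\le$ on $\Om$ then gives $(p)\bigvee_j\f_{s_j}<(p)\bigwedge_i\f_{t_i}$, hence $\bigwedge_i\f_{t_i}\not\le\bigvee_j\f_{s_j}$ in the distributive $\ell$\nbd{-}monoid $\m{End}(\langle\Om,\le\rangle)$, so $\eps$ fails in $\vty{DLM}$. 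Part~(b) of Lemma~\ref{l:preorderimpliesnotvalid} gives the corresponding failure in $\m{Aut}(\langle\Q,\le\rangle)$ and hence in $\vty{LG}$.

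For the converse direction I would argue contrapositively. Suppose $\vty{DLM}\not\models\eps$; by the Anderson--Edwards representation (Theorem~\ref{t:andersonedwards}) there is a chain $\langle\Om,\le\rangle$ and a homomorphism $\f\colon\m{T}_d(X)\to\m{End}(\langle\Om,\le\rangle)$ refuting $\eps$, and so a point $p\in\Om$ at which $(p)\bigwedge_i\f_{t_i}>(p)\bigvee_j\f_{s_j}$. By totality of $\le$ on $\Om$ this unpacks to $(p)\f_{s_j}<(p)\f_{t_i}$ for all $i,j$. I then define $u\preceq v\defiff(p)\f_u\le(p)\f_v$ on $\Sub(S)$. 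Reflexivity, transitivity, and totality are inherited from $\le$ on $\Om$; right-$X$-invariance holds because each $\f_x$ is order-preserving, so $(p)\f_u\le(p)\f_v$ implies $(p)\f_{ux}\le(p)\f_{vx}$; and $s\prec t$ for each $\langle s,t\rangle\in S$ is immediate. For (b), the analogous argument based on Holland's theorem (Theorem~\ref{t:holland}) produces a homomorphism $\p$ into $\m{Aut}(\langle\Om,\le\rangle)$ for some chain. Because each $\p_x$ is an order-automorphism and hence strictly order-preserving, the induced preorder is automatically strictly right-$X$-invariant.

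I do not anticipate any substantive obstacle: the representation theorems supply the bridge from abstract non-validity to concrete refutations on chains, and the preorder is simply the pullback of the chain order under evaluation at a witnessing point. The only point needing care is the use of totality of $\le$ to translate the lattice-theoretic inequality $\bigwedge_i\f_{t_i}\not\le\bigvee_j\f_{s_j}$ into the pointwise strict inequalities $(p)\f_{s_j}<(p)\f_{t_i}$ on which the preorder construction rests, and the observation that automorphisms rather than mere endomorphisms are precisely what upgrade right-$X$-invariance to its strict form in part (b).
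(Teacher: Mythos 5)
Your proposal is correct and follows essentially the same route as the paper: one direction is a direct appeal to Lemma~\ref{l:preorderimpliesnotvalid}, and the converse pulls back the chain order under evaluation at a witnessing point supplied by the Anderson--Edwards (resp.\ Holland) representation, with injectivity of the automorphisms $\p_x$ yielding strictness in part~(b). The paper's proof makes the same moves, including the implicit use of totality of the chain to convert $\bigwedge_i(p)\f_{t_i}>\bigvee_j(p)\f_{s_j}$ into the pairwise strict inequalities.
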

\begin{proof}
For the left-to-right direction of (a), suppose contrapositively that there exists a total right-$X$-invariant preorder $\preceq$ on $\Sub(S)$ satisfying $s\prec t$ for each $\langle s,t\rangle\in S$. By Lemma~\ref{l:preorderimpliesnotvalid}(a), there exist a chain $\langle\Om,\le\rangle$, a homomorphism $\f\colon\m{T}_d(X)\to\m{End(\langle\Om,\le\rangle)}$, and some $p\in\Om$ such that $(p)\f_s < (p)\f_t$ for each $\langle s,t\rangle\in S$. So $(p)\f_{\bigwedge_{i=1}^n t_{i}}>(p)\f_{\bigvee_{j=1}^m s_{j}}$, and hence $\vty{DLM}\not\models\eps$. Similarly, for the left-to-right direction of (b), there exist, by  Lemma~\ref{l:preorderimpliesnotvalid}(b), a homomorphism $\p\colon\m{T}_\ell(X)\to\m{Aut(\langle\Q,\le\rangle)}$ and some $q\in\Q$ such that $(q)\p_{\bigwedge_{i=1}^n t_{i}}>(q)\p_{\bigvee_{j=1}^m s_{j}}$ and hence $\vty{LG}\not\models\eps$. 

For the right-to-left direction of (a), suppose contrapositively that $\vty{DLM}\not\models\eps$. By Theorem~\ref{t:andersonedwards}, there exist a chain $\langle\Om,\le\rangle$, a homomorphism $\f\colon\m{T}_d(X)\to\m{End(\langle\Om,\le\rangle)}$, and some $p\in\Om$ such that $\bigwedge_{i=1}^n (p)\f_{t_i} > \bigvee_{j=1}^m (p)\f_{s_j}$. Then $(p)\f_t >(p)\f_s$ for each $\langle s,t\rangle\in S$ and we define for $u,v\in\Sub(S)$,
\[
u \preceq v\defiff (p)\f_u \leq (p)\f_v.
\]
Clearly $\preceq$ is a total preorder satisfying $s\prec t$ for each $\langle s,t\rangle\in S$. Moreover, since $\f$ is a homomorphism, $\preceq$ is right-$X$-invariant on $\Sub(S)$.

For the right-to-left direction of (b), suppose that $\vty{LG}\not\models\eps$. By Theorem~\ref{t:holland}, there exist a chain $\langle\Om,\le\rangle$, a homomorphism  $\p\colon\m{T}_\ell(X)\to\m{Aut(\langle\Om,\le\rangle)}$, and $q\in\Om$ such that $\bigwedge_{i=1}^n (q)\p_{t_i} >\bigvee_{j=1}^m (q)\p_{s_j}$. The proof then proceeds exactly as in the case of (a), except that we may observe finally that $\preceq$ is strictly right-$X$-invariant on $\Sub(S)$, using the fact that $\p_u$ is bijective for each $u\in\Sub(S)$.
\end{proof}

We now combine the first parts of the preceding lemmas to obtain:

\begin{theorem}\label{t:fmp}
The variety of distributive $\ell$\nbd{-}monoids has the finite model property. More precisely, an equation is valid in all distributive $\ell$\nbd{-}monoids if and only if it is valid in all distributive $\ell$\nbd{-}monoids of order-endomorphisms of a finite chain.
\end{theorem}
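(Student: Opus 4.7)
The plan is to combine the first parts of Lemmas~\ref{l:preorderimpliesnotvalid} and~\ref{l:notvalidimpliespreorder}. The right-to-left direction is immediate: each $\m{End}(\langle\Om,\le\rangle)$ is itself a distributive $\ell$\nbd{-}monoid, so any equation valid throughout $\vty{DLM}$ is \textit{a fortiori} valid in the members arising from finite chains.

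For the forward direction I would argue by contraposition. Suppose an equation $\eps$ fails somewhere in $\vty{DLM}$. Using the normal form observation in the paragraph preceding Lemma~\ref{l:notvalidimpliespreorder}, I would rewrite $\eps$ as a finite conjunction of inequations of the form $\bigwedge_{i=1}^n t_i \le \bigvee_{j=1}^m s_j$ with $s_j, t_i \in F_m(X)$, at least one of which, say $\eps'$, must also fail in $\vty{DLM}$. Setting $S := \{\langle s_j, t_i\rangle \mid 1 \le i \le n,\, 1 \le j \le m\}$, Lemma~\ref{l:notvalidimpliespreorder}(a) then supplies a total right-$X$-invariant preorder on $\Sub(S)$ with $s \prec t$ for every $\langle s, t\rangle \in S$.

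The decisive observation is that $\Sub(S)$ is \emph{finite}, since each of the finitely many monoid terms $s_j, t_i$ has only finitely many initial subterms. Feeding this preorder into Lemma~\ref{l:preorderimpliesnotvalid}(a) therefore produces a finite chain $\langle\Om,\le\rangle$ with $\lvert\Om\rvert \le \lvert\Sub(S)\rvert$, together with a homomorphism $\f\colon\m{T}_d(X) \to \m{End}(\langle\Om,\le\rangle)$ witnessing the failure of $\eps'$, and hence of $\eps$, in the distributive $\ell$\nbd{-}monoid of order-endomorphisms of a finite chain. I do not foresee any substantive obstacle: the two lemmas do all the heavy lifting, and the finite model property drops out simply from the finiteness of $\Sub(S)$, which automatically forces the chain constructed in Lemma~\ref{l:preorderimpliesnotvalid}(a) to be finite.
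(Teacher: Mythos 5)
Your proposal is correct and follows essentially the same route as the paper: reduce to an inequation $\bigwedge_{i=1}^n t_i\le\bigvee_{j=1}^m s_j$ with monoid terms, apply Lemma~\ref{l:notvalidimpliespreorder}(a) to get a total right-$X$-invariant preorder on the finite set $\Sub(S)$, and feed it into Lemma~\ref{l:preorderimpliesnotvalid}(a), whose cardinality bound $\lvert\Om\rvert\le\lvert\Sub(S)\rvert$ yields the finite chain. The key finiteness observation you highlight is exactly the one the paper relies on.
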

\begin{proof}
It suffices to establish the result for an equation $\eps=(\bigwedge_{i=1}^n t_{i}\leq \bigvee_{j=1}^m s_{j})$, where $s_1,\dots,s_m,t_1,\dots,t_n\in F_{m}(X)$. Suppose that $\vty{DLM}\not\models\eps$ and let $S := \{\langle s_j, t_i\rangle\mid 1\le i\le n,\,1\le j\le m\}$. Combining Lemmas~\ref{l:notvalidimpliespreorder}(a) and~\ref{l:preorderimpliesnotvalid}(a), there exist a finite chain $\langle\Om,\le\rangle$, a homomorphism $\f\colon\m{T}_d(X)\to\m{End(\langle\Om,\le\rangle)}$, and some $p\in\Om$ such that $(p)\f_s <(p)\f_t$ for each $\langle s,t\rangle\in S$. But then $(p)\f_{\bigwedge_{i=1}^n t_{i}}>(p)\f_{\bigvee_{j=1}^m s_{j}}$, so  $\m{End(\langle\Om,\le\rangle)}\not\models\eps$. 
\end{proof}

Since $\vty{DLM}$ is a finitely axiomatized variety, we also obtain:

\begin{corollary}\label{c:decidable}
The equational theory of distributive $\ell$\nbd{-}monoids is decidable.
\end{corollary}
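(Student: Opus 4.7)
The plan is to apply the classical Harrop-style argument: any finitely axiomatized variety with the finite model property has a decidable equational theory. Since $\vty{DLM}$ is defined by finitely many equations in a finite signature, the set of valid equations is recursively enumerable via enumeration of equational proofs from the defining axioms. By Theorem~\ref{t:fmp}, the set of \emph{invalid} equations is also recursively enumerable: one enumerates finite algebras of the signature, checks membership in $\vty{DLM}$ against the finitely many defining identities, and searches for a counterexample assignment. Since every equation falls into one of the two lists, running the two semi-procedures in parallel and returning the first answer yields a decision procedure.

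A more direct route reads a decision procedure straight off the proof of Theorem~\ref{t:fmp}, bypassing the enumeration of proofs entirely. Given an arbitrary equation, one first rewrites it effectively to a conjunction of equations of the form $\bigwedge_{i=1}^n t_{i} \le \bigvee_{j=1}^m s_{j}$ with $s_j, t_i \in F_m(X)$, using the distributive $\ell$-monoid identities (as already noted in the text preceding Lemma~\ref{l:notvalidimpliespreorder}). For each such equation, setting $S := \{\langle s_j, t_i\rangle\mid 1\le i\le n,\,1\le j\le m\}$, Lemma~\ref{l:notvalidimpliespreorder}(a) reduces validity in $\vty{DLM}$ to the nonexistence of a total right-$X$-invariant preorder on the \emph{finite} set $\Sub(S)$ strictly separating each pair in $S$. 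Since there are only finitely many binary relations on a finite set, each individually checkable for being a total right-$X$-invariant preorder satisfying the required strict inequalities, this gives an explicit finite combinatorial test.

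The only step requiring attention is the effective rewriting to the normal form $\bigwedge t_i \le \bigvee s_j$, which is routine given the distributivity of $\cdot$ over $\jn,\mt$ and of $\mt$ over $\jn$ available in $\vty{DLM}$. No genuine obstacle arises: the corollary is the expected consequence of combining Theorem~\ref{t:fmp} with finite axiomatizability.
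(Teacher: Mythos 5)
Your first argument is exactly the paper's: Corollary~\ref{c:decidable} is stated there as an immediate consequence of Theorem~\ref{t:fmp} together with the finite axiomatizability of $\vty{DLM}$, i.e., precisely the two-semi-procedures (Harrop-style) argument you describe. Your second, more direct combinatorial decision procedure---normalizing to $\bigwedge_i t_i \le \bigvee_j s_j$ and exhaustively testing total right-$X$-invariant preorders on the finite set $\Sub(S)$ via Lemma~\ref{l:notvalidimpliespreorder}(a)---is also sound and extracts an explicit algorithm, but it goes beyond what the paper records and is not needed for the corollary.
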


Similarly, the second parts of Lemmas~\ref{l:preorderimpliesnotvalid} and~\ref{l:notvalidimpliespreorder} can be used to show that an inverse-free equation is valid in all $\ell$\nbd{-}groups if and only if it is valid in $\m{Aut(\langle\Q,\le\rangle)}$. Indeed, this correspondence is known to hold for all equations.

\begin{theorem}[\cite{Hol76}]\label{t:AutQgeneratesLG}
An equation is valid in all $\ell$\nbd{-}groups if and only if it is valid in $\m{Aut(\langle\Q,\le\rangle)}$.
\end{theorem}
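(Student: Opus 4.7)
The right-to-left direction is immediate, since $\m{Aut}(\langle\Q,\le\rangle)$ is itself an $\ell$\nbd{-}group by Theorem~\ref{t:holland}. For the contrapositive of the left-to-right direction, my plan is to adapt the preorder technique of Lemmas~\ref{l:notvalidimpliespreorder}(b) and~\ref{l:preorderimpliesnotvalid}(b) from monoid terms to group terms. First I would reduce the given $\ell$\nbd{-}group equation to the form $\eps = (\bigwedge_{i=1}^n t_i \le \bigvee_{j=1}^m s_j)$, where $s_j, t_i \in F_g(X)$ are expressed as reduced words over $X \cup \{x^{-1}\mid x\in X\}$; set $S := \{\langle s_j, t_i\rangle \mid 1 \le i \le n,\,1 \le j \le m\}$ and take the analogue of $\Sub(S)$ to be the (finite) set of prefixes, in reduced-word form, of the terms occurring in $S$.

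Assuming $\vty{LG}\not\models\eps$, Theorem~\ref{t:holland} supplies a chain $\langle\Om,\le\rangle$, a homomorphism $\p\colon\m{T}_\ell(X)\to\m{Aut(\langle\Om,\le\rangle)}$, and some $q\in\Om$ with $(q)\p_s < (q)\p_t$ for each $\langle s,t\rangle\in S$. Defining $u\preceq v\defiff (q)\p_u \le (q)\p_v$ on this set of prefixes yields a total preorder; and because every $\p_x$ is bijective with inverse $\p_{x^{-1}}$, the preorder will be strictly right-$(X\cup \{x^{-1}\mid x\in X\})$-invariant. Collapsing to equivalence classes, exactly as in the proof of Lemma~\ref{l:preorderimpliesnotvalid}, produces a finite chain $\Om_0$ together with, for each $x\in X$, a partial order-preserving bijection $\tilde{\f}_x\colon[u]\mapsto[ux]$ on $\Om_0$, whose partial inverse sends $[v]\mapsto[vx^{-1}]$.

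To finish, I would identify $\Om_0$ with a finite subchain of $\Q$ and extend each $\tilde{\f}_x$ to an order-automorphism $\f_x$ of $\langle\Q,\le\rangle$; the inverse $\f_x^{-1}$ then automatically extends the partial map associated with $x^{-1}$, so the unique homomorphism $\f\colon\m{T}_\ell(X)\to\m{Aut(\langle\Q,\le\rangle)}$ extending $x\mapsto\f_x$ is well defined. It will then satisfy $([\e])\f_s<([\e])\f_t$ for every $\langle s,t\rangle\in S$, witnessing $\m{Aut(\langle\Q,\le\rangle)}\not\models\eps$. The main technical point is this simultaneous extension of each partial bijection together with its inverse, which is handled by the standard back-and-forth fact that any order-preserving bijection between two finite subsets of $\Q$ extends to an order-automorphism of $\Q$ (exploiting that $\langle\Q,\le\rangle$ is a countable dense linear order without endpoints).
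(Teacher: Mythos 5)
Your proposal is correct, but note that the paper does not prove this statement at all: it is imported from Holland's 1976 paper via the citation \cite{Hol76}, with only the remark that the \emph{inverse-free} case would follow from Lemmas~\ref{l:preorderimpliesnotvalid}(b) and~\ref{l:notvalidimpliespreorder}(b). What you have done is turn that remark into a self-contained proof of the full theorem by extending the paper's preorder machinery from monoid words to reduced group words over $X\cup\{x^{-1}\mid x\in X\}$ --- essentially the diagram method of Holland and McCleary~\cite{HM79}, which the authors themselves acknowledge as the ancestor of their preorder technique. The one step you should spell out, since it is the crux, is the assertion that $\f_x^{-1}$ ``automatically'' extends the partial map attached to $x^{-1}$: this does hold, because the prefix set of a reduced word is closed in the right way --- $v$ and $vx^{-1}$ both lie in the prefix set if and only if $u:=vx^{-1}$ and $ux=v$ both do, so the graph of the partial $x^{-1}$\nbd{-}map is exactly the converse of the graph of the partial $x$\nbd{-}map, and strict right-invariance makes both partial injections --- but it deserves a sentence rather than the word ``automatically.'' Your reduction to equations of the form $\bigwedge_i t_i\le\bigvee_j s_j$ with $s_j,t_i$ group terms mirrors the paper's normal form, and the back-and-forth extension of a partial order-preserving injection of a finite subchain of $\Q$ to an automorphism of $\langle\Q,\le\rangle$ is standard. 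In short: the paper buys the result by citation; your argument buys a uniform treatment in which Theorem~\ref{t:AutQgeneratesLG} and Proposition~\ref{p:eqtheory} fall out of the same construction.
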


Lemma~\ref{l:fromendtoaut} below provides the key ingredient for showing that an inverse-free equation is valid in $\vty{LG}$ if and only if it is valid in $\vty{DLM}$. First, we illustrate the rather involved construction in the proof of this lemma with a simple example.

\begin{example}\label{ex:endtoaut}
Let $\m{End}(\m{2})$ be the distributive $\ell$\nbd{-}monoid of order-endomorphisms of the two-element chain $\m{2}=\langle\{0,1\},\le\rangle$, and let $\langle k_0,k_1 \rangle$ denote the member of $\m{End}(\m{2})$ with $0 \mapsto k_0$ and $1 \mapsto k_1$. The equation $yxy \leq xyx$ fails in $\m{End}(\m{2})$, since for the homomorphism $\f \colon \m{T}_d(\{x,y\}) \to \m{End}(\m{2})$ extending the assignment $x \mapsto \f_x = \langle 0,0\rangle$ and  $y \mapsto \f_y = \langle 1,1\rangle$, we obtain
\[
(1)\f_{yxy}=(((1)\f_y)\f_x)\f_y = 1> 0= (((1)\f_x)\f_y)\f_x =(1)\f_{xyx}.
\]
Let $S:=\{\langle xyx, yxy\rangle\}$. Then $\f$ yields a total right-$\{x,y\}$-invariant preorder $\preceq$ on $\Sub(S)=\{\e,x,y,xy,yx,xyx,yxy\}$ given by $x \sim yx \sim xyx \prec \e \sim  y \sim xy \sim yxy$, since $(1)\f_{x}=(1)\f_{yx}=(1)\f_{xyx}=0 < 1=(1)\f_{\e}=(1)\f_{y}=(1)\f_{xy}=(1)\f_{yxy}$. Note that $\preceq$ is not strictly right-$\{x,y\}$-invariant, since $x \prec \e$, but $xy \sim y$; this corresponds to the fact that $\f_y$ is not a partial bijective map on $\{0,1\}$, as $0 <1$ and $(0)\f_y = (1)\f_y$.

We describe a total strictly right-$\{x,y\}$-invariant preorder $\trianglelefteq$ on $\Sub(S)$ such that ${\prec}\subseteq{\vartriangleleft}$. This corresponds to constructing partial bijections $\widehat{\f}_x$ and $\widehat{\f}_y$ on $\Sub(S)$ that extend $\f_x$ and $\f_y$, respectively. The relation $\vartriangleleft$ can be computed directly using the definition given in Lemma~\ref{l:fromendtoaut}, but to provide both a simpler description and intuition for the construction, we identify each element  $x_k \cdots x_1$ of  $\Sub(S)$ with the sequence $((1)\f_\e,(1)\f_{x_k}, \ldots, (1)\f_{x_k \cdots x_1})$, so
$\e=(1)$, $x=(1,0)$, $y=(1,1)$, $xy=(1,0,1)$, $yx=(1,1,0)$, $xyx=(1,0,1,0)$, and $yxy=(1,1,0,1)$. Note that these are the paths of elements of $\{0,1\}$ involved in the successive computation steps for each term at the point $p=1$ and  can be visualized as indicated in Figure~\ref{fig:onedlm}.
          
\begin{figure}[H]
\begin{center}
\begin{tikzpicture}[line cap=round,line join=round,>=triangle 45,x=1.0cm,y=1.0cm]
\tikzset{myarrow/.style={->, >=latex', shorten >=1pt, thick}, myarrowone/.style={-, >=latex', shorten >=1pt, dotted, thin}}
\draw[-,color=black] (0.,0.5) -- (0.,1.);
\draw[-,color=black] (1.5,0.5) -- (1.5,1);
\draw[-,color=black] (3,0.5) -- (3,1);
\draw[-,color=black] (4.5,0.5) -- (4.5,1);
\begin{scriptsize}
\draw[color=black] (-0.175,0.5) node {$0$};
\draw[color=black] (-0.175,1.0) node {$\bf{1}$};
\draw[color=black] (4.675,0.5) node {$\bf{0}$};
\draw[color=black] (4.675,1.0) node {$1$};
\draw [fill=black] (4.5,1.) circle (0.5pt);
\draw [fill=black] (3.,1.) circle (0.5pt);
\draw [fill=black] (4.5,0.5) circle (0.5pt);
\draw [fill=black] (1.5,0.5) circle (0.5pt);
\draw [fill=black] (0.,1.) circle (0.5pt);
\draw [fill=black] (0.,0.5) circle (0.5pt);
\draw [fill=black] (3.,0.5) circle (0.5pt);
\draw [fill=black] (1.5,1.) circle (0.5pt);
\draw[color=black] (0.75,1.25) node {$\f_x$};
\draw[color=black] (2.25,1.25) node {$\f_y$};
\draw[color=black] (3.75,1.25) node {$\f_x$};
\draw[myarrow] (0,1) -- (1.5,0.5);
\draw[myarrow, gray] (0.,0.5) -- (1.5,0.5);
\draw[myarrow, gray] (1.5,1) -- (3.,1);
\draw[myarrow] (1.5,0.5) -- (3.,1);
\draw[myarrow] (3,1) -- (4.5,0.5);
\draw[myarrow, gray] (3,0.5) -- (4.5,0.5);
\end{scriptsize}
\end{tikzpicture}
\hspace*{0.5cm}
\begin{tikzpicture}[line cap=round,line join=round,>=triangle 45,x=1.0cm,y=1.0cm]
\tikzset{myarrow/.style={->, >=latex', shorten >=1pt, thick}, myarrowone/.style={-, >=latex', shorten >=1pt, dotted, thin}}
\draw[-,color=black] (0.,0.5) -- (0.,1.);
\draw[-,color=black] (1.5,0.5) -- (1.5,1);
\draw[-,color=black] (3,0.5) -- (3,1);
\draw[-,color=black] (4.5,0.5) -- (4.5,1);
\begin{scriptsize}
\draw[color=black] (-0.175,0.5) node {$0$};
\draw[color=black] (-0.175,1.0) node {$\bf{1}$};
\draw[color=black] (4.675,0.5) node {$0$};
\draw[color=black] (4.675,1.0) node {$\bf{1}$};
\draw [fill=black] (4.5,1.) circle (0.5pt);
\draw [fill=black] (3.,1.) circle (0.5pt);
\draw [fill=black] (4.5,0.5) circle (0.5pt);
\draw [fill=black] (1.5,0.5) circle (0.5pt);
\draw [fill=black] (0.,1.) circle (0.5pt);
\draw [fill=black] (0.,0.5) circle (0.5pt);
\draw [fill=black] (3.,0.5) circle (0.5pt);
\draw [fill=black] (1.5,1.) circle (0.5pt);
\draw[color=black] (0.75,1.25) node {$\f_y$};
\draw[color=black] (2.25,1.25) node {$\f_x$};
\draw[color=black] (3.75,1.25) node {$\f_y$};
\draw[myarrow] (0,1) -- (1.5,1);
\draw[myarrow, gray] (0.,0.5) -- (1.5,1);
\draw[myarrow] (1.5,1) -- (3.,0.5);
\draw[myarrow, gray] (1.5,0.5) -- (3.,0.5);
\draw[myarrow, gray] (3,1) -- (4.5,1);
\draw[myarrow] (3,0.5) -- (4.5,1);
\end{scriptsize}
\end{tikzpicture}
\end{center}
\caption{The paths for $xyx=(1,0,1,0)$ and $yxy=(1,1,0,1)$.}\label{fig:onedlm}
\end{figure}
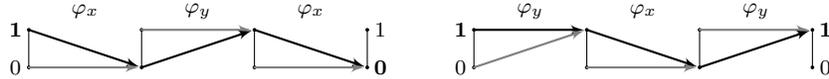
The relation $\vartriangleleft$ on these paths is simply the reverse lexicographic order:
\[
(1,0)\vartriangleleft(1,0,1,0)\vartriangleleft(1,1,0)\vartriangleleft(1)\vartriangleleft(1,0,1)\vartriangleleft(1,1,0,1)
\vartriangleleft(1,1),
\]
where the first three elements serve as copies of $0$ and the last four as copies of $1$, so via the above identification we obtain
\[
x\vartriangleleft xyx\vartriangleleft yx\vartriangleleft \e\vartriangleleft xy\vartriangleleft yxy\vartriangleleft y.
\]
It can be verified that this is a total strictly right-$\{x,y\}$-invariant (pre)order, or, more easily, that the corresponding partial order-endomorphisms $\widehat{\f}_x$ and $\widehat{\f}_y$ are partial bijections as shown in Figure~\ref{fig:twodlm}.

\begin{figure}[H]
\begin{center}
\begin{tikzpicture}[line cap=round,line join=round,>=triangle 45,x=1.0cm,y=1.0cm]
\tikzset{myarrow/.style={->, >=latex', shorten >=1pt, thick}, myarrowone/.style={-, >=latex', shorten >=1pt, dotted, thin}}
\draw[-,color=black] (0.,2.) -- (0.,2.5);
\draw[-,color=black] (0.,3.) -- (0.,3.5);
\draw[-,color=black] (0.,4.) -- (0.,4.5);
\draw[-,color=black] (0.,2.5) -- (0.,3.);
\draw[-,color=black] (0.,3.5) -- (0.,4.);
\draw[-,color=black] (0.,4.5) -- (0.,5.);

\draw[-,color=black] (1.5,2.) -- (1.5,2.5);
\draw[-,color=black] (1.5,3.) -- (1.5,3.5);
\draw[-,color=black] (1.5,4.) -- (1.5,4.5);
\draw[-,color=black] (1.5,2.5) -- (1.5,3.);
\draw[-,color=black] (1.5,3.5) -- (1.5,4.);
\draw[-,color=black] (1.5,4.5) -- (1.5,5.);

\draw[-,color=black] (3.,2.) -- (3.,2.5);
\draw[-,color=black] (3.,3.) -- (3.,3.5);
\draw[-,color=black] (3.,4.) -- (3.,4.5);
\draw[-,color=black] (3.,2.5) -- (3.,3.);
\draw[-,color=black] (3.,3.5) -- (3.,4.);
\draw[-,color=black] (3.,4.5) -- (3.,5.);

\draw[-,color=black] (4.5,2.) -- (4.5,2.5);
\draw[-,color=black] (4.5,3.) -- (4.5,3.5);
\draw[-,color=black] (4.5,4.) -- (4.5,4.5);
\draw[-,color=black] (4.5,2.5) -- (4.5,3.);
\draw[-,color=black] (4.5,3.5) -- (4.5,4.);
\draw[-,color=black] (4.5,4.5) -- (4.5,5.);

\begin{scriptsize}
\draw[color=black] (-0.4,5)    node {$(1,1)\,\,\,\,$};
\draw[color=black] (-0.6,4.5) node {$(1,1,0,1)\,\,\,\,\,\,$};
\draw[color=black] (-0.5,4.0) node {$(1,0,1)\,\,\,\,\,$};
\draw[color=black] (-0.3,3.5) node {$(1)\,\,$};
\draw[color=black] (-0.5,3.0) node {$(1,1,0)\,\,\,\,\,$};
\draw[color=black] (-0.6,2.5) node {$(1,0,1,0)\,\,\,\,\,\,$};
\draw[color=black] (-0.4,2.0) node {$(1,0)\,\,\,\,$};

\draw[color=black] (5.2,5)    node {$(1,1)=y$};
\draw[color=black] (5.65,4.5) node {$(1,1,0,1)=yxy$};
\draw[color=black] (5.45,4.0) node {$(1,0,1)=xy$};
\draw[color=black] (5.05,3.5) node {$(1)=\e$};
\draw[color=black] (5.45,3.0) node {$(1,1,0)=yx$};
\draw[color=black] (5.65,2.5) node {$(1,0,1,0)=xyx$};
\draw[color=black] (5.2,2.0) node {$(1,0)=x$};

\draw [fill=black] (0.,5.) circle (0.5pt);
\draw [fill=black] (0.,4.) circle (0.5pt);
\draw [fill=black] (0.,4.5) circle (0.5pt);
\draw [fill=black] (0.,3.) circle (0.5pt);
\draw [fill=black] (0.,2.) circle (0.5pt);
\draw [fill=black] (0.,3.5) circle (0.5pt);
\draw [fill=black] (0.,2.5) circle (0.5pt);

\draw [fill=black] (1.5,5.) circle (0.5pt);
\draw [fill=black] (1.5,4.) circle (0.5pt);
\draw [fill=black] (1.5,4.5) circle (0.5pt);
\draw [fill=black] (1.5,3.) circle (0.5pt);
\draw [fill=black] (1.5,2.) circle (0.5pt);
\draw [fill=black] (1.5,3.5) circle (0.5pt);
\draw [fill=black] (1.5,2.5) circle (0.5pt);

\draw [fill=black] (3.,5.) circle (0.5pt);
\draw [fill=black] (3.,4.) circle (0.5pt);
\draw [fill=black] (3.,4.5) circle (0.5pt);
\draw [fill=black] (3.,3.) circle (0.5pt);
\draw [fill=black] (3.,2.) circle (0.5pt);
\draw [fill=black] (3.,3.5) circle (0.5pt);
\draw [fill=black] (3.,2.5) circle (0.5pt);

\draw [fill=black] (4.5,5.) circle (0.5pt);
\draw [fill=black] (4.5,4.) circle (0.5pt);
\draw [fill=black] (4.5,4.5) circle (0.5pt);
\draw [fill=black] (4.5,3.) circle (0.5pt);
\draw [fill=black] (4.5,2.) circle (0.5pt);
\draw [fill=black] (4.5,3.5) circle (0.5pt);
\draw [fill=black] (4.5,2.5) circle (0.5pt);

\draw[color=black] (0.75,5.25) node {$\widehat{\f}_{y}$};
\draw[color=black] (2.25,5.25) node {$\widehat{\f}_x$};
\draw[color=black] (3.75,5.25) node {$\widehat{\f}_{y}$};
\draw[myarrow] (0.,3.5) -- (1.5,5);
\draw[myarrow, gray] (0.,3) -- (1.5,4.5);
\draw[myarrow, gray] (0.,2) -- (1.5,4);

\draw[myarrow] (1.5,5) -- (3.,3);
\draw[myarrow, gray] (1.5,4) -- (3.,2.5);
\draw[myarrow, gray] (1.5,3.5) -- (3.,2);

\draw[myarrow, gray] (3,3.5) -- (4.5,5);
\draw[myarrow] (3,3) -- (4.5,4.5);
\draw[myarrow, gray] (3,2) -- (4.5,4);

\end{scriptsize}
\end{tikzpicture}
\caption{The partial bijections $\widehat{\f}_x$ and $\widehat{\f}_y$ and the evaluation of $\widehat{\f}_{xyx}$ at $(1)=\e$.}\label{fig:twodlm}
\end{center}
\end{figure}
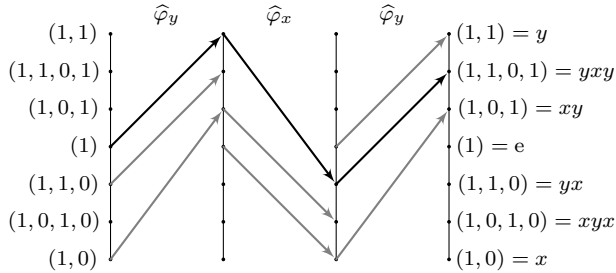
\end{example}

\begin{lemma}\label{l:fromendtoaut} 
For any $S\subseteq F_m(X)^2$ and total right-X-invariant preorder $\preceq$ on $\Sub(S)$, there exists a total strictly right-X-invariant preorder $\trianglelefteq$ on $\Sub(S)$ such that ${\prec}\subseteq{\vartriangleleft}$. 
\end{lemma}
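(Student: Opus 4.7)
The plan is to refine $\preceq$ by a reverse-lexicographic comparison of the sequences of $\preceq$-equivalence classes visited by the successive prefixes of each term, following the pattern illustrated in Example~\ref{ex:endtoaut}.

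I first observe that $\Sub(S)$ is closed under taking prefixes: if $vx \in \Sub(S)$ as an initial segment of some coordinate of a pair in $S$, then $v$ is an initial segment of that same coordinate, and so $v \in \Sub(S)$. Hence for every $u \in \Sub(S)$, writing $u = x_1 \cdots x_k$ uniquely as a word in $X^*$, each prefix $u_i := x_1 \cdots x_i$ (with $u_0 := \e$) lies in $\Sub(S)$. Letting $\Om := \Sub(S)/{\sim}$ be the totally ordered quotient associated with $\preceq$, I assign to each $u \in \Sub(S)$ the path
\[
\widehat{u} := ([u_k], [u_{k-1}], \ldots, [u_0]) \in \Om^{k+1}.
\]

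I then define $\trianglelefteq$ on $\Sub(S)$ by comparing these paths from the left: $u \trianglelefteq v$ holds iff either $\widehat{u} = \widehat{v}$, or the first coordinate on which they differ has $u$'s entry strictly below $v$'s, or $\widehat{u}$ runs out first while agreeing coordinate-for-coordinate with $\widehat{v}$ throughout its length. This is plainly a total preorder, and ${\prec} \subseteq {\vartriangleleft}$ since $u \prec v$ gives $[u_k] = [u] < [v] = [v_m]$, so the first coordinates already decide.

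The heart of the proof is then (strict) right-$X$-invariance, and the pivotal observation is that for $ux \in \Sub(S)$ the path $\widehat{ux}$ arises from $\widehat{u}$ by prepending the single entry $[ux]$ (and similarly for $\widehat{vx}$). So, assuming $u \trianglelefteq v$ with $ux, vx \in \Sub(S)$: the leading coordinate of $\widehat{u}$ is $[u]$ and of $\widehat{v}$ is $[v]$, which combined with the three cases in the definition of $\trianglelefteq$ gives $[u] \leq [v]$; by right-$X$-invariance of $\preceq$ we then get $[ux] \leq [vx]$, ruling out any reversal at the new first coordinate. If $[ux] < [vx]$, that coordinate decides $ux \triangleleft vx$; if $[ux] = [vx]$, the comparison of $\widehat{ux}$ with $\widehat{vx}$ from the second coordinate onward is exactly the comparison of $\widehat{u}$ with $\widehat{v}$, so $u \trianglelefteq v$ transfers to $ux \trianglelefteq vx$. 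Re-running the same case split with $u \triangleleft v$ throughout yields strict right-$X$-invariance.

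I expect the main subtlety to be handling paths of unequal length in the above ``offset by one'' argument; the convention that a shorter path agreeing with the initial segment of a longer one counts as strictly smaller is precisely what makes the reduction to the original path comparison go through uniformly, whether $|u| = |v|$ or not.
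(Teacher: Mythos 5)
Your proposal is correct and follows essentially the same route as the paper: the paper's relation $\vartriangleleft$ is exactly the (reverse) lexicographic comparison of the sequences of $\preceq$-classes of prefixes, with a shorter agreeing path counting as strictly smaller (the paper's case $j=k+2$), and your key observation that $\widehat{ux}$ is $\widehat{u}$ with $[ux]$ prepended is precisely what drives the paper's verification of strict right-$X$-invariance. The only difference is presentational: you obtain totality and transitivity by pulling back the standard lex order on sequences, whereas the paper verifies these by a direct case analysis on the defining index $j$.
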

\begin{proof} 
We define the following relations on $\Sub(S)$:
\begin{align*}
u\sim v\defiff 
&\: u\preceq v\,\text{ and }\,v\preceq u;\\[.05in]
x_k\cdots x_1\vartriangleleft y_l \cdots y_1\defiff 
&\: \exists j\leq l+1\colon x_k\cdots x_i \sim y_l \cdots y_i \text{ for all } i<j \text{ and }\\     
&\:  (x_k\cdots x_j \prec y_l \cdots y_j\,\text{ or }\, j=k+2);\\[.05in]
x_k\cdots x_1\equiv y_l \cdots y_1\defiff 
&\: k=l \,\text{ and }\, x_k\cdots x_i \sim y_l \cdots y_i \text{ for each } i \le k;\\[.05in]
u\trianglelefteq v\defiff 
&\: u\vartriangleleft v \,\text{ or }\, u\equiv v,
\end{align*}
assuming that $x_k\cdots x_i$ is the empty product $\e$ for $i>k$. 

Observe that setting $j=1$ in the definition of $\vartriangleleft$ yields ${\prec}\subseteq{\vartriangleleft}$. Also $u\vartriangleleft v$ implies $u \not\equiv v$. The irreflexivity of $\vartriangleleft$ follows directly from the fact that $\prec$ is irreflexive. For the transitivity of $\vartriangleleft$, we consider $u,v,w \in \Sub(S)$ satisfying $u=x_k\cdots x_1$, $v = y_l\cdots y_1$, $w=z_m\cdots z_1$,  $u\vartriangleleft v$, and $v\vartriangleleft w$. By definition, there exists a $j_1\leq l+1$ such that $x_k\cdots x_i \sim y_l \cdots y_i$ for all $i<j_1$,  and either  $x_k\cdots x_{j_1} \prec y_l \cdots y_{j_1}$ or $j_1=k+2 $, and there exists a $j_2\leq m+1$ such that $y_l\cdots y_i \sim z_m\cdots z_i$ for all  $i<j_2$, and either $y_l\cdots y_{j_2} \prec z_{m} \cdots z_{j_2}$ or $j_2 = l+2 $. There are four cases to check:

\begin{enumerate}

\item $x_k\cdots x_{j_1} \prec y_l \cdots y_{j_1}$ and $y_l\cdots y_{j_2} \prec z_{m} \cdots z_{j_2}$. If  $j_2 \leq j_1$, then $x_k\cdots x_i \sim y_l\cdots y_i \sim z_m\cdots z_i$  for all $i<j_2$ and $x_k\cdots x_{j_2} \sim y_l\cdots y_{j_2} \prec z_m \cdots z_{j_2}$, so (since $\sim$ and $\preceq$ are transitive), $u\vartriangleleft w$. If  $j_1 < j_2$, then $j_1 \leq m+1$ and $x_k\cdots x_i \sim y_l\cdots y_i \sim z_m\cdots z_i$  for all $i<j_1$ and $x_k\cdots x_{j_1} \prec y_l\cdots y_{j_1} \sim z_m \cdots z_{j_1}$, so  $u\vartriangleleft w$.

\item $x_k\cdots x_{j_1} \prec y_l \cdots y_{j_1}$ and $j_2 = l+2$. Then $j_1 \leq l+1 < j_2 \leq m+1$, so $x_k\cdots x_i \sim y_l\cdots y_i \sim z_m\cdots z_i$  for all $i<j_1$ and $x_k\cdots x_{j_1} \prec y_l\cdots y_{j_1} \sim z_m \cdots z_{j_1}$. Hence $u\vartriangleleft w$.

\item $j_1 = k+2$ and $y_l\cdots y_{j_2} \prec z_{m} \cdots z_{j_2}$. If $j_2 < j_1$, then $x_k\cdots x_i \sim y_l\cdots y_i \sim z_m\cdots z_i$  for all $i<j_2$ and $x_k\cdots x_{j_2} \sim y_l\cdots y_{j_2} \prec z_m \cdots z_{j_2}$, so $u\vartriangleleft w$. If $j_1 \leq j_2$, then $j_1 \leq m+1$, $x_k\cdots x_i \sim y_l\cdots y_i \sim z_m\cdots z_i$ for all $i<j_1$, and $j_1 = k+2$, so $u\vartriangleleft w$.

\item $j_1 = k+2$ and $j_2 = l+2$. Then $j_1 \leq m+1$ and $x_k\cdots x_i \sim y_l\cdots y_i \sim z_m\cdots z_i$  for all $i<j_1$. Hence $u\vartriangleleft w$.
\end{enumerate}

For the transitivity of $\trianglelefteq$, there are also several cases to check. Clearly, if $u\vartriangleleft v$ and $v\vartriangleleft w$, then $u\vartriangleleft w$, by the transitivity of $\vartriangleleft$. If $u\vartriangleleft v$ and $v\equiv w$, then $u\vartriangleleft w$, using the definition of $\vartriangleleft$ and $\equiv$ and the transitivity of $\sim$ and $\prec$. Similarly, if $u\equiv v$ and $v\vartriangleleft w$, then $u\vartriangleleft w$. Finally, if $u\equiv v$ and $v\equiv w$, then $u\equiv w$, by the transitivity of $\sim$. Moreover, $\trianglelefteq$ is reflexive, since $u\equiv u$ for any $u\in \Sub(S)$, so  $\trianglelefteq$ is a preorder.  Since  $\preceq$ is total, $u \not\vartriangleleft v$ and $v  \not\vartriangleleft u$ implies  $u\equiv v$; so $\trianglelefteq$ is total. Note also that $u\vartriangleleft v$ if and only if $u\trianglelefteq v$ and $v \not\trianglelefteq u$ as suggested by the notation.
 
 To prove that $\trianglelefteq$ is strictly right-$X$-invariant on $\Sub(S)$, consider $x\in X$ and $u,v\in \Sub(S)$ such that  $u\trianglelefteq v$ and $ux,vx\in \Sub(S)$. Suppose first that $u\equiv v$, so $u$ and $v$ have the same length and $u\sim v$. Then $ux$ and $vx$ have the same length and, since $\preceq$ is right-$X$-invariant, $ux\sim vx$. So $ux\equiv vx$ and hence $ux\trianglelefteq vx$. Now suppose that $u\vartriangleleft v$.  If $ux \prec vx$, then $ux\vartriangleleft vx$. Also, if  $ux \sim vx$, then, since $u\vartriangleleft v$, the definition of $\vartriangleleft$ gives  $ux\vartriangleleft vx$. Finally,  suppose towards a contradiction that $ux\not\preceq vx$. Since $\preceq$ is right-$X$-invariant, $u\not\preceq v$. But then, since $\preceq$ is total, $v \prec u$ and so $v\vartriangleleft u$, contradicting  $u\vartriangleleft v$.
\end{proof}

\begin{proposition}\label{p:eqtheory}
An inverse-free equation is valid in all distributive $\ell$\nbd{-}monoids if and only if it is valid in $\m{Aut(\langle\Q,\le\rangle)}$.
\end{proposition}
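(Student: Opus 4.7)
The plan is to chain together the three lemmas already established, using Lemma~\ref{l:fromendtoaut} as the essential bridge. The work is entirely concentrated in that lemma; once it is available, the proposition is a short combinatorial assembly. As in the proofs of Theorem~\ref{t:fmp} and Lemma~\ref{l:notvalidimpliespreorder}, I would first observe that it suffices to treat equations of the normal form $\eps = (\bigwedge_{i=1}^n t_{i}\le \bigvee_{j=1}^m s_{j})$ with $s_j,t_i\in F_m(X)$, since every inverse-free $\ell$-monoid term is equivalent, in both $\vty{DLM}$ and any $\ell$-group reduct, to a meet of joins of monoid terms.

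For the forward direction, note that the inverse-free reduct of $\m{Aut(\langle\Q,\le\rangle)}$ is a distributive $\ell$\nbd{-}monoid (it is a submonoid of $\m{End(\langle\Q,\le\rangle)}$ closed under the pointwise lattice operations), so any equation valid in all of $\vty{DLM}$ is, in particular, valid in $\m{Aut(\langle\Q,\le\rangle)}$.

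For the backward direction, I would argue contrapositively. Suppose $\vty{DLM}\not\models\eps$ and set $S := \{\langle s_j,t_i\rangle \mid 1\le i\le n,\,1\le j\le m\}$. By Lemma~\ref{l:notvalidimpliespreorder}(a), there exists a total right-$X$-invariant preorder $\preceq$ on $\Sub(S)$ with $s\prec t$ for each $\langle s,t\rangle\in S$. Applying Lemma~\ref{l:fromendtoaut}, I obtain a total strictly right-$X$-invariant preorder $\trianglelefteq$ on $\Sub(S)$ satisfying ${\prec}\subseteq{\vartriangleleft}$; in particular, $s\vartriangleleft t$ still holds for every $\langle s,t\rangle\in S$. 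Then Lemma~\ref{l:preorderimpliesnotvalid}(b) yields a homomorphism $\p\colon\m{T}_\ell(X)\to\m{Aut(\langle\Q,\le\rangle)}$ and a point $q\in\Q$ with $(q)\p_s<(q)\p_t$ for each $\langle s,t\rangle\in S$. Hence $(q)\p_{\bigvee_{j=1}^m s_j}<(q)\p_{\bigwedge_{i=1}^n t_i}$, showing that $\eps$ fails in $\m{Aut(\langle\Q,\le\rangle)}$.

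There is no real obstacle at this stage: the genuine content lies in Lemma~\ref{l:fromendtoaut}, which upgrades a right-$X$-invariant preorder (DLM-side) to a strictly right-$X$-invariant one (LG-side) on the same finite set of initial subterms, while preserving the strict inequalities of $S$. One should only be careful that the upgrade is performed on $\Sub(S)$ itself (not an extension), so that Lemma~\ref{l:preorderimpliesnotvalid}(b) applies directly and produces a counterexample in $\m{Aut(\langle\Q,\le\rangle)}$ rather than in some larger automorphism group.
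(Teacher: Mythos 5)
Your proposal is correct and follows essentially the same route as the paper's proof: the forward direction from the fact that the inverse-free reduct of $\m{Aut(\langle\Q,\le\rangle)}$ is a distributive $\ell$\nbd{-}monoid, and the converse by chaining Lemma~\ref{l:notvalidimpliespreorder}(a), Lemma~\ref{l:fromendtoaut}, and Lemma~\ref{l:preorderimpliesnotvalid}(b) on the same set $\Sub(S)$. Nothing further is needed.
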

\begin{proof}
The left-to-right direction follows directly from the fact that the inverse-free reduct of $\m{Aut(\langle\Q,\le\rangle)}$ is a distributive $\ell$\nbd{-}monoid. For the converse, suppose without loss of generality that $\vty{DLM}\not\models\bigwedge_{i=1}^n t_{i}\leq \bigvee_{j=1}^m s_{j}$, where $s_j,t_i\in{F_{m}(X)}$ for $1\le{i}\le{n}$,  $1\le{j}\le{m}$, and let $S := \{\langle s_j, t_i\rangle\mid 1\le i\le n,\,1\le j\le m\}$. By Lemma~\ref{l:notvalidimpliespreorder}(a), there exists a total right-$X$-invariant preorder $\preceq$ on $\Sub(S)$ satisfying $s\prec t$ for each $\langle s,t\rangle\in S$. By Lemma~\ref{l:fromendtoaut}, there exists a total strictly right-X-invariant preorder $\trianglelefteq$ on $\Sub(S)$ such that ${\prec}\subseteq{\vartriangleleft}$. In particular, $s\vartriangleleft t$ for each $\langle s,t\rangle \in S$. Hence, by Lemma~\ref{l:preorderimpliesnotvalid}(b), there exist a homomorphism $\p\colon\m{T}_\ell(X)\to\m{Aut(\langle\Q,\le\rangle)}$ and $q\in\Q$ such that $(q)\p_{s_j} <(q)\p_{t_i}$ for $1\le i\le n$, $1\le j\le m$. So $\m{Aut(\langle\Q,\le\rangle)}\not\models\bigwedge_{i=1}^n t_{i}\leq \bigvee_{j=1}^m s_{j}$. 
\end{proof}

The main result of this section now follows directly from Proposition~\ref{p:eqtheory} and the fact that the inverse-free reduct of any $\ell$\nbd{-}group is a distributive $\ell$\nbd{-}monoid.

\begin{theorem}\label{t:eqtheory}
An inverse-free equation is valid in the variety of $\ell$\nbd{-}groups if and only if it is valid in the variety of distributive $\ell$\nbd{-}monoids. 
\end{theorem}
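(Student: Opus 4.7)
The plan is to deduce Theorem~\ref{t:eqtheory} by chaining two characterizations already in place, one for each variety, so that both sides of the biconditional reduce to validity in the same concrete $\ell$-group $\m{Aut}(\langle\Q,\le\rangle)$. Because all the technical work has been absorbed into Proposition~\ref{p:eqtheory}, I expect this to be essentially a one-line deduction; the proof splits into two directions, one of which is entirely trivial.

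For the direction $\vty{DLM}\models\eps\Rightarrow\vty{LG}\models\eps$ (with $\eps$ inverse-free), I would argue directly. Let $\m{L}=\langle L,\mt,\jn,\cdot,\iv{},\e\rangle$ be any $\ell$-group. Its inverse-free reduct $\langle L,\mt,\jn,\cdot,\e\rangle$ is a distributive $\ell$-monoid (this is noted just after Theorem~\ref{t:holland} in the introduction, and follows immediately from the definitions). Since $\eps$ mentions only the operations $\mt,\jn,\cdot,\e$, its validity in the reduct is exactly its validity in $\m{L}$, and by hypothesis the reduct satisfies $\eps$. As $\m{L}$ was arbitrary, $\vty{LG}\models\eps$.

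For the converse direction, I would use Proposition~\ref{p:eqtheory} together with Theorem~\ref{t:AutQgeneratesLG}. Suppose $\eps$ is an inverse-free equation valid in $\vty{LG}$. By Theorem~\ref{t:AutQgeneratesLG}, $\m{Aut}(\langle\Q,\le\rangle)\models\eps$. But then Proposition~\ref{p:eqtheory} gives $\vty{DLM}\models\eps$, completing the argument. Equivalently, one sees that for inverse-free equations, validity in $\vty{LG}$, validity in $\vty{DLM}$, and validity in $\m{Aut}(\langle\Q,\le\rangle)$ are three equivalent conditions, mutually tied by Proposition~\ref{p:eqtheory} and Theorem~\ref{t:AutQgeneratesLG}.

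In short, there is no real obstacle at this stage: the substantive content—the construction in Lemma~\ref{l:fromendtoaut} that upgrades a total right-$X$-invariant preorder to a strictly right-$X$-invariant one, together with the preorder/homomorphism translations of Lemmas~\ref{l:preorderimpliesnotvalid} and~\ref{l:notvalidimpliespreorder}—has already done the work, packaged into Proposition~\ref{p:eqtheory}. The theorem is a direct corollary, and I would present it as such, remarking additionally (as the authors do in the introduction) that by Birkhoff's theorem this shows $\vty{DLM}$ is exactly the variety generated by the inverse-free reducts of $\ell$-groups.
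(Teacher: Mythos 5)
Your proposal is correct and matches the paper's own argument, which likewise derives the theorem directly from Proposition~\ref{p:eqtheory} together with the observation that inverse-free reducts of $\ell$\nbd{-}groups are distributive $\ell$\nbd{-}monoids. (The only cosmetic remark: for the step $\vty{LG}\models\eps\Rightarrow\m{Aut}(\langle\Q,\le\rangle)\models\eps$ you do not need Theorem~\ref{t:AutQgeneratesLG}; it is immediate since $\m{Aut}(\langle\Q,\le\rangle)$ is itself an $\ell$\nbd{-}group.)
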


\noindent
It follows by Birkhoff's variety theorem~\cite{Bir35} that $\vty{DLM}$ is generated as a variety by the class of inverse-free reducts of $\ell$\nbd{-}groups and hence that distributive $\ell$\nbd{-}monoids are precisely the homomorphic images of the inverse-free subreducts of $\ell$\nbd{-}groups.

Since the equational theories of the varieties of distributive lattices~\cite{HRB87} and $\ell$\nbd{-}groups~\cite{GM16} are co-NP-complete, we also obtain the following complexity result:

\begin{corollary}
The equational theory of distributive $\ell$\nbd{-}monoids is co-NP-complete.
\end{corollary}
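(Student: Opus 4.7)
The plan is to obtain the complexity bound by sandwiching the equational theory of $\vty{DLM}$ between those of two varieties whose complexities are already known, leaning on Theorem~\ref{t:eqtheory} for the upper bound and on the inclusion of the lattice fragment in $\vty{DLM}$ for the lower bound.

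For membership in co-NP, note that every equation in the signature of distributive $\ell$\nbd{-}monoids is, syntactically, an inverse-free equation in the signature of $\ell$\nbd{-}groups. By Theorem~\ref{t:eqtheory}, such an equation is valid in $\vty{DLM}$ if and only if it is valid in $\vty{LG}$, which yields a trivial (identity) polynomial-time reduction of the equational theory of $\vty{DLM}$ to that of $\vty{LG}$. Since the equational theory of $\vty{LG}$ is in co-NP by~\cite{GM16}, so is the equational theory of $\vty{DLM}$.

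For co-NP hardness, I would reduce from the equational theory of distributive lattices, which is co-NP-complete by~\cite{HRB87}. Every lattice equation is syntactically a $\vty{DLM}$ equation, and one direction is immediate: the $\langle\mt,\jn\rangle$\nbd{-}reduct of any distributive $\ell$\nbd{-}monoid is a distributive lattice, so any lattice equation valid in $\vty{DLM}$ is also valid in the variety of distributive lattices. For the other direction, given a distributive lattice $\m{L}$, I would freely adjoin a new top element $\top$ to obtain a distributive lattice $\m{L}^\top$ with maximum element, and then equip it with the multiplication $a\cdot b:=a\mt b$ and identity $\e:=\top$; distributivity of $\mt$ and $\jn$ over this multiplication is immediate from the distributivity of the underlying lattice, so $\m{L}^\top$ becomes a distributive $\ell$\nbd{-}monoid. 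Any lattice equation valid in $\vty{DLM}$ then holds in $\m{L}^\top$, and hence, by restriction to $L$, in $\m{L}$. This yields a polynomial-time (identity) reduction of the equational theory of distributive lattices to that of $\vty{DLM}$, completing the hardness argument.

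The only substantive point is the routine verification that $\m{L}^\top$ with $ab:=a\mt b$ and $\e:=\top$ genuinely satisfies the axioms of a distributive $\ell$\nbd{-}monoid; everything else amounts to observing that both reductions are literally the identity on terms, so no actual computational work needs to be done on an input equation.
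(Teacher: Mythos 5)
Your proof is correct and takes essentially the same route as the paper, which justifies the corollary in a single sentence: membership in co-NP via Theorem~\ref{t:eqtheory} and the co-NP upper bound for $\vty{LG}$ from~\cite{GM16}, and hardness from the co-NP-completeness of the equational theory of distributive lattices~\cite{HRB87}, for which your $\m{L}^\top$ construction (multiplication as meet, identity as an adjoined top) supplies exactly the detail the paper leaves implicit. One small wording slip: the observation that lattice reducts of distributive $\ell$\nbd{-}monoids are distributive lattices gives that validity in all distributive lattices implies validity in $\vty{DLM}$ for lattice equations, not the converse as you state; the converse is precisely what your $\m{L}^\top$ construction provides, so both directions of the needed equivalence are in fact covered by your argument.
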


The correspondence between $\ell$\nbd{-}groups and distributive $\ell$\nbd{-}monoids established in Theorem~\ref{t:eqtheory} does not extend to inverse-free quasiequations. In particular, the quasiequation $xz\eq yz\,\Longrightarrow\,x\eq y$, describing right cancellativity, is valid in all $\ell$\nbd{-}groups, but not in the distributive $\ell$\nbd{-}monoid $\m{End}(\m{2})$. A further example is the quasiequation $xy\eq\e\,\Longrightarrow\,yx\eq\e$, which is clearly valid in all $\ell$\nbd{-}groups, but not in the distributive $\ell$\nbd{-}monoid $\m{End}(\langle\N,\le\rangle)$. To see this, define $f,g\in {\rm End}(\langle\N,\le\rangle)$ by $(n)f:=n+1$ and $(n)g:=\max(n-1,0)$; then $(n)fg=n$ for all $n\in\N$, but $(0)gf=1$. Let us also remark, however, that this quasiequation is valid in any finite distributive $\ell$\nbd{-}monoid $\m{L}$. If $ab=\e$ for some $a,b\in L$, then, by finiteness, $a^n=a^{n+k}$ for some $n,k\in\N^{>0}$, so $\e=a^nb^n= a^{n+k}b^n=a^k$ and $ba = a^k ba = a^{k-1}aba = a^k=\e$. Hence the variety of distributive $\ell$\nbd{-}monoids does not have the strong finite model property.


\section{Right orders on free groups and free monoids}\label{s:right_orders_on_free_groups_and_free_monoids}

In this section, we use Theorem~\ref{t:eqtheory} and a characterization of valid $\ell$\nbd{-}group equations in $\vty{LG}$ given in~\cite{CM19} to relate the existence of a right order on a free monoid satisfying some finite set of inequalities to the validity of an equation in $\vty{DLM}$ (Theorem~\ref{p:rovalidmon}). In particular, it follows that any right order on the free monoid over a set $X$ extends to a right order on the free group over $X$ (Corollary~\ref{c:rofreemonfreegr}).

Recall first that a {\em right order} on a monoid (or group) $\m{M}$ is a total order $\le$ on $M$ such that $a \le b$ implies $ac \le bc$ for any $a,b,c\in{M}$; in this case, $\m{M}$ is said to be {\em right-orderable}. Left orders and left-orderability are defined symmetrically.

The following result of~\cite{CM19} establishes a correspondence between the validity of an equation in $\vty{LG}$ and the existence of a right order on a free group with a negative cone (or, by duality, a positive cone) containing certain elements.

\begin{theorem}[{\cite[Theorem~2]{CM19}}]\label{t:freegroupsandrightorders}
Let $s_1,\dots,s_m\in F_g(X)$. Then $\vty{LG}\models\e \le  \bigvee_{j=1}^m s_{j}$ if and only if there is no right order $\le$ on $\m{F}_g(X)$ satisfying $s_j<\e$ for $1\le j\le m$.
\end{theorem}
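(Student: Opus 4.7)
The plan is to prove both directions by contrapositive.

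For the $(\Leftarrow)$ direction, suppose $\le$ is a right order on $\m{F}_g(X)$ with $s_j<\e$ for each $j$. I would use a Cayley-style representation: set $(h)\pi_g:=hg$, which defines a group homomorphism $\pi\colon F_g(X)\to\m{Aut}(\langle F_g(X),\le\rangle)$ (each $\pi_g$ being an order-automorphism by right-invariance of $\le$, and composition working out thanks to the right-action convention used throughout the paper). To ensure we are working inside an $\ell$-group, I would order-embed the chain $\langle F_g(X),\le\rangle$ densely into a chain $\Om$ whose automorphism group is an $\ell$-group (e.g., $\Om=\Q$ via a standard dynamical realization when $X$ is countable; a larger dense linear order otherwise), extending each $\pi_g$ to an order-automorphism of $\Om$. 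Under the resulting homomorphism $\m{T}_\ell(X)\to\m{Aut}(\langle\Om,\le\rangle)$, right-invariance gives $(h)\pi_{s_j}=hs_j<h$ on the dense image of $F_g(X)$, whence $\pi_{s_j}<\pi_\e$ and $\bigvee_j\pi_{s_j}<\pi_\e$, refuting $\e\le\bigvee_j s_j$ in $\vty{LG}$.

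For the $(\Rightarrow)$ direction, suppose $\vty{LG}\not\models\e\le\bigvee_j s_j$. By Theorem~\ref{t:holland}, there exist a chain $\langle\Om,\le\rangle$, a homomorphism $\psi\colon\m{T}_\ell(X)\to\m{Aut}(\langle\Om,\le\rangle)$, and $p\in\Om$ with $(p)\psi_{\bigvee_j s_j}<p$, hence $(p)\psi_{s_j}<p$ for each $j$. Define a relation $\preceq$ on $F_g(X)$ by $u\preceq v\iff(p)\psi_u\le(p)\psi_v$. Since each $\psi_k$ is order-preserving, $\preceq$ is a total right-invariant preorder with $s_j\prec\e$ for each $j$, and the $\sim$-classes are precisely the left cosets of the stabilizer subgroup $H:=\{g\in F_g(X):(p)\psi_g=p\}$.

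The main step is refining $\preceq$ to a genuine right order on $F_g(X)$ without destroying the strict relations $s_j\prec\e$. I would exploit that $H$, as a subgroup of the free group $\m{F}_g(X)$, is itself free by Nielsen--Schreier and hence right-orderable; fix any right order $<_H$ on $H$ and set, for $g_1,g_2\in F_g(X)$,
\[
g_1<g_2\defiff g_1\prec g_2\,\text{ or }\,(g_1\sim g_2\,\text{ and }\,g_1g_2^{-1}<_H\e),
\]
which is well-defined because $g_1\sim g_2$ forces $g_1g_2^{-1}\in H$. Totality follows from totality of $\preceq$ and $<_H$; right-invariance follows from $(g_1k)(g_2k)^{-1}=g_1g_2^{-1}$; and transitivity in the nontrivial case (all three elements $\sim$-equivalent) reduces to $a,b<_H\e\Rightarrow ab<_H b<_H\e$, an immediate consequence of right-invariance of $<_H$. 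Since $s_j\prec\e$ strictly, $s_j<\e$ in the refined order, giving the desired right order on $\m{F}_g(X)$. The hard part is precisely this refinement: it is here that one uses the nontrivial group-theoretic input that subgroups of free groups are right-orderable, bridging the gap between the preorder extracted from Holland's representation and an honest total right order.
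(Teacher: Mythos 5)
The paper does not actually prove this statement: it is imported verbatim from \cite[Theorem~2]{CM19}, so there is no in-paper proof to compare against. Your argument reads as a correct, essentially self-contained proof, and a reasonable reconstruction of the sort of argument given in \cite{CM19}. Two remarks. First, in the $(\Leftarrow)$ direction the intermediate claim that right-invariance gives $(h)\pi_{s_j}=hs_j<h$ for all $h$ is not justified: a right order yields $s_jc<c$, not $cs_j<c$, so the pointwise inequality $\pi_{s_j}<\pi_\e$ does not follow. This does not sink the argument, because refuting $\e\le\bigvee_j s_j$ only requires failure at a single point, and at $h=\e$ you do get $(\e)\pi_{s_j}=s_j<\e$; also, the dense embedding into $\Q$ is unnecessary, since $\m{Aut}(\langle\Om,\le\rangle)$ is already an $\ell$\nbd{-}group under the pointwise order for any chain $\Om$ (as in Theorem~\ref{t:holland}). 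Second, the $(\Rightarrow)$ direction correctly isolates where the real content lies: the total right-invariant preorder extracted from Holland's representation is exactly the group-term analogue of the preorders in Lemma~\ref{l:notvalidimpliespreorder}, and the work is in refining it to a genuine right order. Your tie-breaking inside the point stabilizer $H$ is sound --- well-definedness, totality, transitivity, and right-invariance all reduce to the identity $(g_1k)(g_2k)^{-1}=g_1g_2^{-1}$ together with right-invariance of $<_H$ --- and it preserves the strict inequalities $s_j\prec\e$. Note that you could bypass Nielsen--Schreier: $H$ inherits a right order by restriction from any right order on $\m{F}_g(X)$, free groups being orderable (e.g.\ via the Magnus embedding); either way the argument rests on that nontrivial classical input, which is legitimate and not circular here.
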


\noindent
Combining this result with Theorem~\ref{t:eqtheory}, we obtain a correspondence between the validity of an equation in $\vty{DLM}$ and the existence of a right order on a free monoid satisfying certain corresponding inequalities.

\begin{proposition}\label{p:rovalidmonone}
Let $\eps=(\bigwedge_{i=1}^n t_{i}\leq \bigvee_{j=1}^m s_{j})$ where $s_j,t_i\in{F_{m}(X)}$ \thinspace for $1\le{i}\le{n}$,  $1\le{j}\le{m}$. Then $\vty{DLM}\models\eps$ if and only if there is no right order $\le$ on $\m{F}_m(X)$ satisfying $s_j < t_i$ \thinspace for $1\le{i}\le{n}$, $1\le{j}\le{m}$.
\end{proposition}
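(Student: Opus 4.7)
The plan is to combine Theorem~\ref{t:eqtheory}, Theorem~\ref{t:freegroupsandrightorders}, and Lemma~\ref{l:notvalidimpliespreorder}(a); both directions of the equivalence can be established by contraposition. A key observation that avoids having to extend a right order from $\m{F}_m(X)$ to $\m{F}_g(X)$ (which would amount to a circular appeal to the about-to-be-derived Corollary~\ref{c:rofreemonfreegr}) is that one direction can be routed through Lemma~\ref{l:notvalidimpliespreorder}(a) without passing through $\vty{LG}$ at all.

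For the left-to-right implication, I would assume contrapositively that there is a right order $\le$ on $\m{F}_m(X)$ with $s_j < t_i$ for all $1 \le i \le n$, $1 \le j \le m$, and set $S := \{\langle s_j, t_i\rangle \mid 1\le i\le n,\, 1\le j\le m\}$. The restriction of $\le$ to $\Sub(S) \subseteq F_m(X)$ is automatically a total right-$X$-invariant preorder on $\Sub(S)$ satisfying $s \prec t$ for each $\langle s,t\rangle \in S$, so Lemma~\ref{l:notvalidimpliespreorder}(a) immediately yields $\vty{DLM}\not\models\eps$.

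For the right-to-left implication, I would assume contrapositively that $\vty{DLM}\not\models\eps$ and apply Theorem~\ref{t:eqtheory} to obtain $\vty{LG}\not\models\eps$. Using the identity $(\bigwedge_{i=1}^n t_i)^{-1} = \bigvee_{i=1}^n t_i^{-1}$ together with distributivity of multiplication over joins, the equation $\bigwedge_{i=1}^n t_i \le \bigvee_{j=1}^m s_j$ is equivalent over $\vty{LG}$ to $\e \le \bigvee_{i,j} s_j t_i^{-1}$, so the latter also fails in $\vty{LG}$. Theorem~\ref{t:freegroupsandrightorders} then provides a right order $\le$ on $\m{F}_g(X)$ with $s_j t_i^{-1} < \e$, that is, $s_j < t_i$, for all $i, j$, and restricting this order to the canonical embedded copy of the free monoid $\m{F}_m(X)$ inside $\m{F}_g(X)$ yields the desired right order.

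The argument is essentially a short assembly of the three cited results, so I do not expect any serious obstacle. The only computational point that deserves care is the $\ell$\nbd{-}group reformulation of $\bigwedge_i t_i \le \bigvee_j s_j$ as $\e \le \bigvee_{i,j} s_j t_i^{-1}$, which is routine; the only conceptual point is the decision to handle one direction via Lemma~\ref{l:notvalidimpliespreorder}(a) rather than via Theorem~\ref{t:freegroupsandrightorders}, precisely because Corollary~\ref{c:rofreemonfreegr} is meant to be a consequence, not an input, of the present proposition.
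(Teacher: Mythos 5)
Your proposal is correct and follows essentially the same route as the paper's own proof: the left-to-right direction is obtained by feeding the restriction of the right order on $\m{F}_m(X)$ to $\Sub(S)$ into Lemma~\ref{l:notvalidimpliespreorder}(a), and the converse goes through Theorem~\ref{t:eqtheory}, the rewriting of $\eps$ as $\e \le \bigvee_{i,j} s_j\iv{t}_i$, and Theorem~\ref{t:freegroupsandrightorders}, followed by restriction to $F_m(X)$. Your explicit remark about avoiding circularity with Corollary~\ref{c:rofreemonfreegr} matches exactly how the paper structures the argument.
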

\begin{proof}
For the  left-to-right direction, suppose contrapositively that there exists a right order $\le$ on $\m{F}_m(X)$ satisfying $s_j < t_i$ for $1\le{i}\le{n}$,  $1\le{j}\le{m}$. Then $\vty{DLM}\not\models\eps$ by Lemma~\ref{l:notvalidimpliespreorder}(a). For the converse,  suppose contrapositively that $\vty{DLM}\not\models\eps$. By Theorem~\ref{t:eqtheory}, also $\vty{LG}\not\models\eps$ and, rewriting the equation, 
\[
\vty{LG} \not\models \e \le \bigvee \{s_j\iv{t}_i\mid 1\le{i}\le{n},\,1\le{j}\le{m}\}.
\]
By Theorem~\ref{t:freegroupsandrightorders}, there exists a right order $\le$ on $\m{F}_g(X)$ such that $s_j\iv{t}_i<\e$, or equivalently $s_j < t_i$, for  $1\le{i}\le{n}$, $1\le{j}\le{m}$. The restriction of $\le$ to $F_m(X)$ therefore provides the required right order on $\m{F}_m(X)$.
\end{proof}

Proposition~\ref{p:rovalidmonone} relates the validity of an equation in $\vty{DLM}$ to the existence of a right order extending an associated set of inequalities on a free monoid. However, it does not relate the existence of a right order on a free monoid extending a given set of inequalities to the validity of some equation in $\vty{DLM}$. The next result establishes such a relationship via the introduction of finitely many new variables.

\begin{theorem}\label{p:rovalidmon}
Let  $s_1,t_1\dots, s_n,t_n\in{F_{m}(X)}$. The following are equivalent:
\begin{enumerate}
\item[\rm (1)]	There exists a right order $\le$ on $\m{F}_g(X)$ satisfying $s_i < t_i$ \thinspace for $1\le{i}\le{n}$.
\item[\rm (2)]	There exists a right order $\le$ on $\m{F}_m(X)$ satisfying $s_i < t_i$ \thinspace for $1\le{i}\le{n}$.
\item[\rm (3)]	$\vty{DLM}\not\models\bigwedge_{i=1}^n{t_iy_i}\le\bigvee_{i=1}^n s_iy_i$ \thinspace for any distinct $y_1, \dots, y_n\not\in X$.
\end{enumerate}
\end{theorem}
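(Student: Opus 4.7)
The plan is to prove the cycle $(1)\Rightarrow(2)\Rightarrow(3)\Rightarrow(1)$, which yields the three-way equivalence at once.

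The implication $(1)\Rightarrow(2)$ is immediate, since $\m{F}_m(X)$ embeds as a submonoid of $\m{F}_g(X)$ and the restriction of a right order to a submonoid is again a right order preserving the strict inequalities $s_i<t_i$.

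For $(3)\Rightarrow(1)$, I would first invoke Theorem~\ref{t:eqtheory} to pass from $\vty{DLM}\not\models\bigwedge_i t_iy_i\le\bigvee_i s_iy_i$ to $\vty{LG}\not\models\bigwedge_i t_iy_i\le\bigvee_i s_iy_i$. In $\vty{LG}$, using the identities $(\bigwedge_k u_k)^{-1}=\bigvee_k u_k^{-1}$ and $a\le b\Leftrightarrow\e\le ba^{-1}$, this equation is equivalent to $\e\le\bigvee_{i,j} s_iy_iy_j^{-1}t_j^{-1}$. Theorem~\ref{t:freegroupsandrightorders}, applied over the alphabet $X\cup\{y_1,\dots,y_n\}$, then produces a right order on $\m{F}_g(X\cup Y)$ satisfying $s_iy_iy_j^{-1}t_j^{-1}<\e$ for all $i,j$. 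Specializing to $i=j$ yields $s_it_i^{-1}<\e$, equivalently $s_i<t_i$, and restricting the order to the subgroup $\m{F}_g(X)$ delivers the right order required for $(1)$.

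The substantive direction is $(2)\Rightarrow(3)$. Given a right order $\le$ on $\m{F}_m(X)$ with $s_i<t_i$ for each $i$, the strategy is to apply Lemma~\ref{l:notvalidimpliespreorder}(a) by constructing a total right-$(X\cup Y)$-invariant preorder $\preceq$ on $\Sub(S)$, where $S=\{\langle s_jy_j,t_iy_i\rangle\mid 1\le i,j\le n\}$, with $s_jy_j\prec t_iy_i$ for each pair in $S$. Because the $y_i$ are fresh and each appears in $S$ only as the final letter of $s_iy_i$ and $t_iy_i$, the set $\Sub(S)$ splits as the disjoint union of $\Sub(S)\cap F_m(X)$ (the prefixes of the $s_j$ and $t_i$) with the $2n$ full $Y$-terms $s_jy_j,t_iy_i$. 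I would define $\preceq$ to agree with $\le$ on the $F_m(X)$-part, to collapse all $s_jy_j$ into a single equivalence class placed strictly below everything else, and to collapse all $t_iy_i$ into a single class placed strictly above everything else. Totality and transitivity are transparent. Right-$x$-invariance for $x\in X$ is inherited from $\le$, because no $Y$-term ends in an element of $X$ and so $ux\in\Sub(S)$ with $u\in\Sub(S)$ already forces $u,ux\in F_m(X)$. Right-$y_i$-invariance reduces to the single nontrivial case $u=s_i$, $v=t_i$ (since $y_i$ is fresh), which is handled directly by the construction, giving $s_iy_i\prec t_iy_i$.

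The main conceptual hurdle — and the reason fresh variables $y_i$ appear in (3) in the first place — is that without them one would be trying to refute $\bigwedge_i t_i\le\bigvee_i s_i$, whose failure by Proposition~\ref{p:rovalidmonone} would require every cross-inequality $s_j<t_i$ to hold simultaneously in a right order on $\m{F}_m(X)$, a strictly stronger condition than the pairwise hypothesis $s_i<t_i$. The variables $y_i$ decouple the pairs: since each $y_i$ occurs in $\Sub(S)$ only at the ends of $s_iy_i$ and $t_iy_i$, the relative placement of $s_iy_i$ against $t_jy_j$ for $i\ne j$ is unconstrained by $\le$ on $F_m(X)$ and can be chosen freely, which is precisely what enables the two-level collapse above.
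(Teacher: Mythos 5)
Your proof is correct, and the cycle $(1)\Rightarrow(2)\Rightarrow(3)\Rightarrow(1)$ matches the paper's overall structure, but your handling of the substantive direction $(2)\Rightarrow(3)$ is genuinely different. The paper restricts the given right order to $\Sub(\{\langle s_i,t_i\rangle\})$, feeds it into Lemma~\ref{l:preorderimpliesnotvalid}(b) to get a homomorphism into $\m{Aut}(\langle\Q,\le\rangle)$ refuting $\e\le\bigvee_i s_i\iv{t}_i$, and then observes that validity of $\bigwedge_i t_iy_i\le\bigvee_i s_iy_i$ in $\vty{LG}$ would be contradicted by the substitution $y_i\mapsto \iv{t}_i$; the fresh variables are thus decoupled \emph{algebraically}, by letting them stand in for inverses inside an $\ell$\nbd{-}group. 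You instead decouple them \emph{combinatorially}: you build the total right-$(X\cup Y)$-invariant preorder on $\Sub(S)$ for the full set of cross pairs directly (the two-level collapse of the $Y$-terms around the given right order on the $X$-prefixes) and invoke Lemma~\ref{l:notvalidimpliespreorder}(a). Your verification of invariance is sound --- the key points, that no element of $\Sub(S)$ ending in a $Y$-letter has a proper extension in $\Sub(S)$ and that $uy_i\in\Sub(S)$ forces $u\in\{s_i,t_i\}$, are exactly right --- and your route has the merit of staying entirely inverse-free and not needing Lemma~\ref{l:preorderimpliesnotvalid}(b) or $\m{Aut}(\langle\Q,\le\rangle)$ at all, at the cost of an explicit case analysis the paper avoids. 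For $(3)\Rightarrow(1)$ the two arguments are essentially the same; you apply Theorem~\ref{t:freegroupsandrightorders} over the alphabet $X\cup Y$ and restrict the resulting right order to the subgroup $\m{F}_g(X)$ after specializing to the diagonal terms $s_iy_i\iv{y}_i\iv{t}_i=s_i\iv{t}_i$, whereas the paper first eliminates the $y_i$ via the valid inequality $\bigvee_i s_i\iv{t}_i\le(\bigvee_i s_iy_i)(\bigvee_i \iv{y}_i\iv{t}_i)$ and applies the theorem over $X$ alone; both are correct.
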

\begin{proof}
(1)\,$\Rightarrow$\,(2). This follows directly from the fact that if $\le$ is a right order on $\m{F}_g(X)$, then the restriction of $\le$ to $F_m(X)$ is a right order on $\m{F}_m(X)$.

(2)\,$\Rightarrow$\,(3).  Let $\le$ be a right order on $\m{F}_m(X)$ satisfying $s_i < t_i$ for $1\le{i}\le{n}$, assuming without loss of generality that $X$ is finite. By Lemma~\ref{l:preorderimpliesnotvalid}(b), there exists a homomorphism $\p\colon\m{T}_\ell(X)\to\m{Aut(\langle\Q,\le\rangle)}$ and $q\in\Q$ such that $(q)\p_{s_i} < (q)\p_{t_i}$ for $1\le i\le n$. So $\m{Aut(\langle\Q,\le\rangle)}\not\models \e\le \bigvee_{i=1}^n s_it^{-1}_i$ and clearly $\vty{LG}\not\models \bigwedge_{i=1}^n{t_it^{-1}_i}\le\bigvee_{i=1}^n s_it^{-1}_i$. But then for any distinct $y_1, \dots, y_n\not\in X$, we have $\vty{LG}\not\models \bigwedge_{i=1}^n{t_iy_i}\le\bigvee_{i=1}^n s_iy_i$ and therefore also $\vty{DLM}\not\models\bigwedge_{i=1}^n{t_iy_i}\le\bigvee_{i=1}^n s_iy_i$.
 
(3)\,$\Rightarrow$\,(1). Suppose that $\vty{DLM}\not\models\bigwedge_{i=1}^n{t_iy_i}\le\bigvee_{i=1}^n s_iy_i$ for some distinct $y_1, \dots, y_n \not\in X$. By Theorem~\ref{t:eqtheory}, also $\vty{LG}\not\models\bigwedge_{i=1}^n{t_iy_i}\le\bigvee_{i=1}^n s_iy_i$ and, by multiplying by the inverse of the left side, $\vty{LG} \not\models\e\le(\bigvee_{i=1}^n s_iy_i)(\bigvee_{i=1}^n \iv{y}_i\iv{t}_i)$. But then, since $\vty{LG}\models \bigvee_{i=1}^n s_i\iv{t}_i\le(\bigvee_{i=1}^n s_iy_i)(\bigvee_{i=1}^n \iv{y}_i\iv{t}_i)$, it follows that $\vty{LG} \not\models \e \le\bigvee_{i=1}^n s_i\iv{t}_i$. Hence, by Theorem~\ref{t:freegroupsandrightorders}, there exists a right order $\le$ on $\m{F}_g(X)$ satisfying $s_i\iv{t}_i<\e$, or equivalently $s_i < t_i$, for $1\le{i}\le{n}$. 
\end{proof}

For any group $\m{G}$ and $N\subseteq G$, there exists a right order $\le$ on $\m{G}$ satisfying $a<\e$ for all $a\in N$ if and only if for every finite subset $N'\subseteq N$, there exists a right order $\le'$ on $\m{G}$ satisfying $a<\e$ for all $a\in N'$ (see, e.g.,~\cite[Chapter~5, Lemma~1]{KM94}). Theorem~\ref{p:rovalidmon} therefore yields the following corollary:

\begin{corollary}\label{c:rofreemonfreegr}
Every right order on the free monoid over a set $X$ extends to a right order on the free group over $X$.
\end{corollary}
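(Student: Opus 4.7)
The plan is to combine Theorem~\ref{p:rovalidmon} with the compactness-style principle for right orders on groups stated immediately before the corollary. Let $\le_m$ be a right order on $\m{F}_m(X)$, and define
\[
N := \set{s\iv{t} \in F_g(X) \mid s,t \in F_m(X) \text{ and } s <_m t}.
\]
I would first observe that producing a right order $\le_g$ on $\m{F}_g(X)$ that extends $\le_m$ is equivalent to producing a right order on $\m{F}_g(X)$ under which every element of $N$ lies strictly below $\e$. Indeed, $s\iv{t} < \e$ in any right order on $\m{F}_g(X)$ says exactly that $s < t$ there; and since $\le_m$ is total, controlling the $\le_g$-comparison of every pair of distinct elements of $F_m(X)$ forces the restriction of $\le_g$ to $F_m(X)$ to coincide with $\le_m$.

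Next, by the cited compactness result, such a $\le_g$ exists provided that, for every finite subset $N' = \set{s_1\iv{t}_1,\dots,s_n\iv{t}_n} \subseteq N$, there is some right order on $\m{F}_g(X)$ making every element of $N'$ negative. Since $\le_m$ itself is a right order on $\m{F}_m(X)$ satisfying $s_i <_m t_i$ for $1\le i\le n$, the implication (2)$\Rightarrow$(1) of Theorem~\ref{p:rovalidmon} immediately yields a right order on $\m{F}_g(X)$ with $s_i < t_i$, equivalently $s_i\iv{t}_i < \e$, for each $i$. This provides the required finite witnesses.

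The argument is essentially a direct assembly, and I do not expect any substantial obstacle; the only point to verify with care is that the right order $\le_g$ produced by compactness truly restricts to $\le_m$ on $F_m(X)$, rather than to some coarser right order. This is a short check using totality of $\le_m$: for distinct $s,t \in F_m(X)$, either $s <_m t$, in which case $s\iv{t} \in N$ gives $s <_g t$, or $t <_m s$, in which case $t\iv{s} \in N$ gives $t <_g s$. Combined with reflexivity on equal terms, this shows $\le_g$ and $\le_m$ agree on $F_m(X)$, completing the proof.
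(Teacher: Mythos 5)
Your proposal is correct and follows essentially the same route as the paper: the paper likewise invokes the compactness principle for right orders on groups cited from~\cite{KM94} and supplies the finite witnesses via the implication (2)\,$\Rightarrow$\,(1) of Theorem~\ref{p:rovalidmon}. Your explicit check that the resulting order on $\m{F}_g(X)$ restricts to the given order on $F_m(X)$ (using totality) is a detail the paper leaves implicit, and it is handled correctly.
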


\noindent
Note also that by left-right duality, every left order on the free monoid over a set $X$ extends to a left order on the free group over $X$.

We conclude this section with a brief discussion of the relationship between distributive $\ell$\nbd{-}monoids and right-orderable monoids. It was proved in~\cite{Hol65} that a group is right-orderable if and only if it is a subgroup of the group reduct of an $\ell$\nbd{-}group, and claimed in~\cite{AE84} that an analogous theorem holds in the setting of  distributive $\ell$\nbd{-}monoids. Indeed, any monoid $\m{M}$ that admits a right order $\le$ embeds into the monoid reduct of the distributive $\ell$\nbd{-}monoid $\m{End}(\langle M,\le\rangle)$ by mapping each $a\in M$ to the order-endomorphism $x\mapsto xa$. However, contrary to the claim made in~\cite{AE84}, it is not the case that every submonoid of the monoid reduct of a distributive $\ell$\nbd{-}monoid is right-orderable. 

\begin{proposition}\label{p:notorderable}
The monoid reduct of $\mathbf{End}(\langle \Om,\leq \rangle)$ is not right-orderable for any chain $\langle \Om,\leq \rangle$ with $\lvert \Om \rvert \geq 3$.
\end{proposition}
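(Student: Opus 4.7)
My plan is to fix elements $a < b < c$ in $\Omega$ and exhibit a small configuration of order-endomorphisms whose algebraic relations are incompatible with any total right-invariant order. I would work with the order-endomorphisms
\[
(x)s_- := \begin{cases} a & \text{if } x \leq b, \\ b & \text{otherwise,} \end{cases} \qquad (x)s_+ := \begin{cases} b & \text{if } x \leq a, \\ c & \text{otherwise,} \end{cases}
\]
of $\langle\Omega,\leq\rangle$, together with the constant order-endomorphisms $c_a,c_b,c_c$. Direct computation yields the key identities $s_-^2 = c_a$, $s_+^2 = c_c$, $(s_+s_-)s_+ = s_+$, $(s_-s_+)s_- = s_-$, $c_b s_- = c_a$, and $c_b s_+ = c_c$. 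Setting $u := s_-s_+$ and $v := s_+s_-$, the same computations show $u \neq s_+$ and $v \neq s_-$ (both pairs disagree at $b$), and clearly $s_- \neq c_a$ and $s_+ \neq c_c$.

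Assuming toward a contradiction that $\preceq$ is a right order on the monoid reduct of $\m{End}(\langle\Omega,\leq\rangle)$, I would split into four cases according to the positions of $e$ relative to $s_-$ and $s_+$. When $e \prec s_-$ and $e \prec s_+$, right-multiplying these strict inequalities by $s_+$ and $s_-$ respectively gives $s_+ \preceq u$ and $s_- \preceq v$; right-multiplying $s_- \preceq v$ by $s_+$ then produces $u \preceq s_+$, so antisymmetry forces $u = s_+$, a contradiction. The case $s_- \prec e$ and $s_+ \prec e$ is handled in the same spirit: $s_- \prec e$ gives $u \preceq s_+$ and hence (multiplying by $s_-$) $s_- \preceq v$, while $s_+ \prec e$ gives $v \preceq s_-$, so antisymmetry forces $s_- = v$.

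For the two mixed cases, it suffices to treat $e \prec s_-$ and $s_+ \prec e$, the remaining case being obtained by the order-reversing symmetry of $\Omega$ that swaps the roles of $s_-$ and $s_+$. From these inequalities, right-multiplication by $s_-$ and $s_+$ respectively yields $s_- \preceq c_a$ and $c_c \preceq s_+$. Since $e \neq c_b$, either $e \prec c_b$, in which case right-multiplication by $s_+$ gives $s_+ \preceq c_b s_+ = c_c$ and thus $s_+ = c_c$, or $c_b \prec e$, in which case right-multiplication by $s_-$ gives $c_a = c_b s_- \preceq s_-$ and thus $s_- = c_a$. Either conclusion contradicts the distinctness noted above.

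The main obstacle is isolating a witnessing configuration whose multiplicative structure is sufficiently rigid: the squaring identities $s_-^2 = c_a$ and $s_+^2 = c_c$ squeeze $s_-$ and $s_+$ between the identity and the corresponding constants in any right order, while the ``Coxeter-like'' identities $us_- = s_-$ and $vs_+ = s_+$ supply precisely the reverse inequalities required to collide with antisymmetry; once these identities are verified, the four-case chase of inequalities is routine.
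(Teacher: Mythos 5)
Your proof is correct, but it takes a genuinely different route from the paper's. The paper first shows that the single monoid $\m{End}(\m{3})$ of order-endomorphisms of the three-element chain admits no right-invariant total order, via a two-case analysis on whether $\langle 0,0,2\rangle<\langle 0,1,1\rangle$ that never mentions the identity element, and then transfers this to an arbitrary chain $\langle\Om,\le\rangle$ with $\lvert\Om\rvert\ge 3$ by building a \emph{semigroup} embedding of $End(\m{3})$ into $End(\langle\Om,\le\rangle)$ using a floor-type retraction onto a three-element subchain. You instead work directly inside $\m{End}(\langle\Om,\le\rangle)$: your witnesses $s_-,s_+,c_a,c_b,c_c$ are defined uniformly on all of $\Om$, the identities $s_-^2=c_a$, $s_+^2=c_c$, $(s_+s_-)s_+=s_+$, $(s_-s_+)s_-=s_-$, $c_bs_-=c_a$, $c_bs_+=c_c$ all check out under the paper's convention $(x)(fg)=((x)f)g$, and the four-case chase on the position of $\e$ is sound and exhaustive (since $\e\ne s_\pm$). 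This buys you a uniform one-step argument with no embedding lemma, at the cost of using the identity element essentially, which the paper's version deliberately avoids so that a mere semigroup embedding suffices for the transfer. One small imprecision: the ``order-reversing symmetry of $\Om$'' you invoke for the fourth case need not exist as an actual map (take $\Om=\N$); what you are really using is the formal duality of your identity system under the swap $s_-\leftrightarrow s_+$, $c_a\leftrightarrow c_c$, under which the fourth case goes through verbatim (yielding $c_a\preceq s_-\preceq c_a$ or $c_c\preceq s_+\preceq c_c$), so this is cosmetic rather than a gap.
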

\begin{proof}
We first prove the claim for the distributive $\ell$\nbd{-}monoid $\m{End}(\m{3})$ of order-endomorphisms of the three-element chain $\m{3}=\langle\{0,1,2\},\le\rangle$, using the same notation for endomorphisms as in Example~\ref{ex:endtoaut}. Assume towards a contradiction that $\m{End}(\m{3})$ admits a right order $\le$. Note that for any $a,b,c\in End(\m{3})$, if $ba<ca$, then $b<c$, since otherwise $c\leq b$ would yield $ca \leq ba$. Suppose first that $\langle 0,0,2 \rangle < \langle 0,1,1\rangle$. Then 
\[
\langle 0,0,1 \rangle = \langle 0,0,2\rangle \circ \langle 0,1,1\rangle \leq \langle 0,1,1\rangle \circ\langle 0,1,1\rangle = \langle 0,1,1\rangle
\]
and $\langle 0,0,1\rangle \circ \langle 0,1,1\rangle =  \langle 0,0,1 \rangle < \langle 0,1,1\rangle = \langle 0,1,2\rangle \circ\langle 0,1,1\rangle $. So $\langle 0,0,1\rangle < \langle 0,1,2\rangle$, yielding $\langle 0,0,0 \rangle = \langle 0,0,1\rangle \circ \langle 0,0,1\rangle \leq \langle 0,1,2\rangle \circ\langle 0,0,1\rangle = \langle 0,0,1\rangle$. But $\langle 0,0,2 \rangle < \langle 0,1,1\rangle$ also implies $\langle 0,0,1 \rangle = \langle 0,0,2\rangle \circ \langle 0,0,1\rangle \leq \langle 0,1,1\rangle \circ\langle 0,0,1\rangle = \langle 0,0,0\rangle$. Hence $\langle 0,0,1\rangle = \langle 0,0,0\rangle$, a contradiction. By replacing $<$ with $>$ in the above argument,  $\langle 0,0,2 \rangle > \langle 0,1,1\rangle$ implies $\langle 0,0,1\rangle = \langle 0,0,0\rangle$, also a contradiction. So the monoid reduct of $\m{End}(\m{3})$ is not right-orderable.

Now let $\langle \Om,\leq \rangle$ be any chain with $\lvert \Om \rvert \geq 3$. Without loss of generality we can assume that $\m{3}$ is a subchain of $\Om$. We define a map $\f\colon End(\m{3}) \to End(\langle \Om,\leq \rangle)$ by fixing for each $q\in\Om$,
\[
(q)\f_f := 
\begin{cases}
(\lfloor q \rfloor) f& \text{if } 0\leq q\\
q                    & \text{if } q < 0,
\end{cases}
\]
where $\lfloor q \rfloor := \max \{ k\in \set{0,1,2} \mid k\leq q \}$. Observe that $\lfloor \cdot \rfloor$ is order-preserving, so $\f_f \in End(\langle \Om,\leq \rangle)$ for every $f\in  End(\m{3})$. Also $\f$ is injective, since $\f_f$ restricted to $\m{3}$ is $f$ for each $f\in  End(\m{3})$. Let $f,g \in End(3)$ and $q\in \Om$. If $q<0$, then $(q)\f_{f\circ g} = q = (q)(\f_f\circ \f_g)$. Otherwise $0\leq q$, so $(q)\f_{f\circ g} = ((\lfloor q \rfloor)f)g  =  (\lfloor(\lfloor q \rfloor)f\rfloor)g  =  (q)(\f_f\circ \f_g)$. Hence $\f$ is a semigroup embedding. Finally, since the monoid reduct of $\mathbf{End}(\m{3})$ is not right-orderable, it follows that the monoid reduct of $\mathbf{End}(\langle \Om,\leq \rangle)$ is not right-orderable.
\end{proof}

Note that, although a group is left-orderable if and only if it is right-orderable, this is not the case in general for monoids, even when they are submonoids of groups~\cite{Weh21}. Nevertheless, a very similar argument to the one given in the proof of Proposition~\ref{p:notorderable} shows that also the monoid of endomorphisms of any chain with at least three elements cannot be left-orderable. 


\section{From $\ell$\nbd{-}groups to distributive $\ell$\nbd{-}monoids}\label{s:from_lgroups_to_distributive_lmonoids}

The validity of an equation in the variety of Abelian $\ell$\nbd{-}groups is equivalent to the validity of the inverse-free equation obtained  by multiplying on both sides to remove inverses. Although this method fails for $\vty{LG}$, we show here that inverses can still be effectively eliminated from equations, while preserving validity, via the introduction of new variables. Hence, by Theorem~\ref{t:eqtheory}, the validity of an equation in $\vty{LG}$ is equivalent to the validity of finitely many effectively constructed inverse-free equations in $\vty{DLM}$ (Theorem~\ref{t:inversefree}).

The following lemma shows how to remove one occurrence of an inverse from an equation while preserving validity in $\vty{LG}$.

\begin{lemma}\label{l:density}
Let $r,s,t,u,v\in{T_{\ell}(X)}$ and $y\not\in{X}$.
\begin{enumerate}
\item[\rm (a)] $\vty{LG}\models\e \le v \jn st\iff\vty{LG}\models\e \le v \jn sy \jn \iv{y}t$.
\item[\rm (b)] $\vty{LG} \models u \le v \jn s\iv{r}t \iff \vty{LG} \models ryu \le ryv \jn rysyu \jn t$.
\end{enumerate}
\end{lemma}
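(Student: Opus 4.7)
The plan is to prove (a) semantically using Theorem~\ref{t:AutQgeneratesLG}, which reduces $\vty{LG}$-validity to validity in $\m{Aut}(\langle\Q,\le\rangle)$, and then derive (b) from (a) by a short algebraic manipulation, using that left and right multiplication by a fixed element are order-isomorphisms in any $\ell$-group.

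For (a), the equation $\e \le v \jn sy \jn \iv{y}t$ fails in $\m{Aut}(\langle\Q,\le\rangle)$ exactly when some assignment $\f$ and some $q \in \Q$ satisfy the three strict inequalities $(q)\f_v < q$, $(q)\f_s\f_y < q$, and $(q)\f_y^{-1}\f_t < q$. For the forward direction, applying the order-automorphisms $\f_y^{-1}$ and $\f_t^{-1}$ to the latter two yields $(q)\f_s < (q)\f_y^{-1} < (q)\f_t^{-1}$, which chains to $(q)\f_s\f_t < q$; combined with $(q)\f_v < q$, this witnesses failure of $\e \le v \jn st$ on the same assignment. For the backward direction, starting from $(q)\f_v < q$ and $(q)\f_s\f_t < q$, set $p_1 := (q)\f_s$ and $p_2 := (q)\f_t^{-1}$, so that $p_1 < p_2$; using the fact that $\m{Aut}(\langle\Q,\le\rangle)$ acts 2-transitively on strictly increasing pairs, choose $\f_y$ with $(p_1)\f_y < q < (p_2)\f_y$, and the three required strict inequalities then hold at $q$.

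For (b), apply (a) with $v$, $s$, $t$ replaced by $v\iv{u}$, $s$, $\iv{r}t\iv{u}$ respectively to obtain
\[
\vty{LG} \models \e \le v\iv{u} \jn s\iv{r}t\iv{u} \iff \vty{LG} \models \e \le v\iv{u} \jn sy \jn \iv{y}\iv{r}t\iv{u}.
\]
Right-multiplying both inequalities by $u$, which preserves $\vty{LG}$-validity since $a \le b$ iff $au \le bu$, converts this to the equivalence of $\vty{LG} \models u \le v \jn s\iv{r}t$ and $\vty{LG} \models u \le v \jn syu \jn \iv{y}\iv{r}t$. Finally, left-multiplying the latter inequality by $ry$ and using $ry\iv{y}\iv{r} = \e$ yields $\vty{LG} \models ryu \le ryv \jn rysyu \jn t$, the required form.

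The main obstacle is the backward direction of (a), where we must construct the witness $\f_y$ that is absent from the failing assignment for the left-hand equation; this step relies essentially on the homogeneity of $\langle\Q,\le\rangle$ under $\m{Aut}(\langle\Q,\le\rangle)$. Everything else—the forward direction of (a) and the entirety of (b)—is routine, relying only on rearrangement of strict inequalities, the group identities $y\iv{y} = \e$ and $u\iv{u} = \e$, and the order-isomorphism property of $\ell$-group multiplication.
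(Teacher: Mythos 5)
Your proposal is correct and follows essentially the same route as the paper: the nontrivial backward direction of (a) is exactly the paper's construction of a witness $\f_y$ in $\m{Aut}(\langle\Q,\le\rangle)$ via Theorem~\ref{t:AutQgeneratesLG} and the homogeneity of $\Q$, and (b) is derived from (a) by the same multiplications by $\iv{u}$ and $ry$. The only cosmetic difference is that you argue the easy direction of (a) semantically in $\m{Aut}(\langle\Q,\le\rangle)$, whereas the paper cites the quasiequation $\e \le xy \jn z \Longrightarrow \e \le x \jn y \jn z$ from~\cite{GM16}; the two arguments are interchangeable.
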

\begin{proof}
The left-to-right direction of (a) follows from the validity in $\vty{LG}$ of the quasiequation $\e \le xy \jn z \,\Longrightarrow\,\e \le x \jn y \jn z$ (cf.~\cite[Lemma 3.3]{GM16}). For the converse, suppose  that $\vty{LG}\not\models\e \le v \jn st$. Then $\m{Aut}(\langle\Q,\le\rangle)\not\models\e \le v \jn st$, by Theorem~\ref{t:AutQgeneratesLG}. Hence there exist a homomorphism $\f \colon \m{T}_{\ell}(X) \to \m{Aut}(\langle\Q,\le\rangle)$ and $q \in \Q$ such that $(q)\f_v<q$ and $(q)\f_{st} < q$. Consider $p_1,p_2\in\Q$ with $p_1<q<p_2$. Since $(q)\f_s <(q)\f_{\iv{t}}$ and $p_1< p_2$, there exists a partial order-embedding on $\Q$ mapping $(q)\f_s$ to $p_1$ and $(q)\f_{\iv{t}}$ to $p_2$ that extends to an order-preserving bijection $\widehat{\f}_y \in {\rm Aut}(\langle\Q,\le\rangle)$. Now let also $\widehat{\f}_x:=\f_x$ for each $x\in X$ to obtain a homomorphism $\widehat{\f} \colon \m{T}_{\ell}(X \cup \{y\})\to \m{Aut}(\langle\Q,\le\rangle)$ satisfying $q > (q)\widehat{\f}_v$, $q>(q)\widehat{\f}_{sy}$, and $q>(q)\widehat{\f}_{\iv{y}t}$. Hence $\vty{LG}\not\models\e \le v \jn sy \jn \iv{y}t$ as required.

For (b), we apply (a) to obtain
\begin{align*}
\vty{LG} \models u \le v \jn s\iv{r}t\:
\iff\: & \vty{LG} \models\e \le v\iv{u} \jn s\iv{r}t\iv{u}\\
\iff\: & \vty{LG} \models\e \le v\iv{u} \jn sy \jn \iv{y}\iv{r}t\iv{u}\\
\iff\: & \vty{LG} \models ryu \le ryv \jn rysyu \jn t.\qedhere
\end{align*}
\end{proof}

Eliminating variables as described in the proof of Lemma~\ref{l:density} yields an inverse-free equation that is valid in $\vty{LG}$ if and only if it is valid in $\vty{DLM}$.

\begin{theorem}\label{t:inversefree}
Let $\eps$ be any $\ell$\nbd{-}group equation with variables in a set $X$. A finite set of inverse-free equations $\Si$ with variables in $X\cup Y$ for some finite set $Y$ can be effectively constructed such that $\eps$ is valid in all $\ell$\nbd{-}groups if and only if the equations in $\Si$ are valid in all distributive $\ell$\nbd{-}monoids.
\end{theorem}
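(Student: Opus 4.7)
The plan is to reduce $\eps$ to a finite conjunction of $\ell$\nbd{-}group inequalities in a convenient normal form, then eliminate inverse occurrences one at a time via Lemma~\ref{l:density}(b), and finally invoke Theorem~\ref{t:eqtheory} to transfer validity from $\vty{LG}$ to $\vty{DLM}$. For the normal form, recall that every $\ell$\nbd{-}group term is equivalent in $\vty{LG}$ both to a meet of joins of group terms and to a join of meets of group terms, so $\eps$ is equivalent to a finite conjunction of inequalities $\bigwedge_{i=1}^n t_i \le \bigvee_{j=1}^m s_j$ with the $s_j, t_i$ group terms. Multiplying each such inequality by the inverse of its left-hand side and distributing, we may reduce to inequalities of the shape $\e \le \bigvee_k w_k$ with each $w_k$ a group word, and by using $\iv{(ab)} = \iv{b}\iv{a}$ we may also arrange that $\iv{}$ is only applied to individual variables.

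For each such inequality, I would then eliminate inverses inductively, maintaining the invariant that the inequality has the form $u \le \bigvee_k v_k$ with $u$ inverse-free (which holds initially with $u = \e$). If some joinand $v_{k_0}$ still contains an inverse, pick an innermost occurrence, necessarily of the shape $\iv{x}$ for a single variable $x$; write $v_{k_0} = s \iv{x} t$ and set $v := \bigvee_{k \ne k_0} v_k$. Applying Lemma~\ref{l:density}(b) with $r = x$ and a fresh variable $y$ replaces $u \le v \jn s \iv{x} t$ by the $\vty{LG}$\nbd{-}equivalent inequality $xyu \le xyv \jn xysyu \jn t$. Distributing $xy$ over $v$ turns this back into an inequality of the same shape $u' \le \bigvee v'_k$ with $u' = xyu$ still inverse-free, and since $x$ itself carries no inverses the total number of inverse occurrences strictly decreases by one. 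Finitely many iterations therefore yield an inverse-free inequality over some finite variable set $X \cup Y$.

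Collecting the resulting inverse-free inequalities into $\Si$, the construction is effective at each step, and by construction $\vty{LG}\models\eps$ iff $\vty{LG}\models\si$ for every $\si\in\Si$. Since each $\si\in\Si$ is inverse-free, Theorem~\ref{t:eqtheory} gives $\vty{LG}\models\si$ iff $\vty{DLM}\models\si$, which chains together to the claimed equivalence. The main point to check is that the inductive measure really does decrease in the inverse-elimination step: this relies on picking an innermost inverse (so that the $r$ in Lemma~\ref{l:density}(b) is inverse-free and contributes no duplicated inverses) and simultaneously preserving the invariant that the left-hand side is inverse-free (so that the $u$ that gets duplicated as $xyu$ and $xysyu$ does not reintroduce inverses). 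With both conditions in place, the measure strictly drops at each step and the procedure terminates.
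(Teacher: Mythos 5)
Your proposal is correct and follows essentially the same route as the paper: reduce $\eps$ to finitely many inequalities of the form $\e \le \bigvee_k w_k$ with the $w_k$ group words, then repeatedly apply Lemma~\ref{l:density}(b) to an innermost inverse $\iv{x}$ while keeping the left-hand side inverse-free so that the number of inverse occurrences strictly decreases, and finally invoke Theorem~\ref{t:eqtheory}. The paper's proof implements exactly this, phrasing the invariant as an algorithm with input $t_0\in T_m(X)$ and $t_1,\dots,t_n\in T_g(X)$.
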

\begin{proof}
Let $\eps$ be any equation with variables in a set $X$. Since $\vty{LG}\models s\eq t$ if and only if $\vty{LG}\models \e\le \iv{s}t\mt s\iv{t}$ and every $\ell$\nbd{-}group term is equivalent in $\vty{LG}$ to a meet of joins of group terms, we may assume that $\eps$ has the form $\e\le u_1\mt\cdots\mt u_k$ for some joins of group terms $u_1,\dots,u_k$. Suppose now that for each $i\in\{1,\dots,k\}$, a finite set of inverse-free equations $\Si_i$ with variables in $X\cup Y_i$ for some finite set $Y_i$ can be effectively constructed such that $\e\le u_i$ is valid in all $\ell$\nbd{-}groups if and only if the equations in $\Si_i$ are valid in all distributive $\ell$\nbd{-}monoids. Then $\Si:=\Si_1\cup\cdots\cup\Si_k$ with variables in $X\cup Y$, where $Y:=Y_1\cup\cdots\cup Y_k$ is the finite set of inverse-free equations required by the theorem.

Generalizing slightly for the sake of the proof, it therefore suffices to define an algorithm that given as input any $t_0\in{T}_m(X)$ and $t_1, \dots, t_n\in{T}_g(X)$ constructs $s_0,s_1,\dots,s_m\in{T}_m(X\cup Y)$ for some finite set $Y$ such that
\[
\vty{LG}\models{t_0 \le t_1 \jn \cdots \jn t_n}\iff\vty{DLM}\models{s_0 \le s_1 \jn \cdots \jn s_m}.
\]
If $t_0 \le t_1 \jn \cdots \jn t_n$ is an inverse-free equation, then the algorithm outputs the same equation, which satisfies the equivalence by Theorem~\ref{t:eqtheory}. Otherwise, suppose without loss of generality that $t_1 = u\iv{x}v$. By Lemma~\ref{l:density}, for any $y\not\in X$,
\[
\vty{LG}\models{t_0 \le t_1 \jn \cdots \jn t_n}\iff\vty{LG}\models{xyt_0 \le xyuyt_0 \jn v \jn xyt_2 \jn\cdots \jn xyt_n}.
\]
The equation $xyt_0 \le xyuxt_0 \jn v \jn xyt_2 \jn\cdots \jn xyt_n$ contains fewer inverses than $t_0 \le t_1 \jn \cdots \jn t_n$, so iterating this procedure produces an inverse-free equation after finitely many steps.
\end{proof}

\noindent 
Since the variety $\vty{DLM}$ has the finite model property (Theorem~\ref{t:fmp}), the algorithm given in the proof of Theorem~\ref{t:inversefree} provides an alternative proof of the decidability of the equational theory of $\ell$\nbd{-}groups, first established in~\cite{HM79}.


\section{Totally ordered monoids}\label{s:representable_distributive_lmonoids}

In this section, we turn our attention to totally ordered monoids and groups, that is, distributive $\ell$\nbd{-}monoids and $\ell$\nbd{-}groups where the lattice order is total. We show that the variety generated by the class $\vty{OM}$ of totally ordered monoids can be axiomatized relative to $\vty{DLM}$ by a single equation (Proposition~\ref{p:representabledlm}), and that there exist inverse-free equations that are valid in the class $\vty{OG}$ of totally ordered groups but not in $\vty{OM}$ (Theorem~\ref{t:subrvarrep}). We also prove that there is an inverse-free equation that is valid in all finite totally ordered monoids, but not in the ordered group of the integers (Proposition~\ref{p:fmpfails}), showing that the variety of commutative distributive $\ell$\nbd{-}monoids and the varieties generated by totally ordered monoids and inverse-free reducts of totally ordered groups do not have the finite model property (Corollary~\ref{c:fmpfails}). The proofs of these results build on earlier work on  distributive $\ell$\nbd{-}monoids by Merlier~\cite{Mer71} and Repnitski\u{\i}~\cite{Rep83,Rep84}.

We begin by establishing a subdirect representation theorem for distributive $\ell$\nbd{-}monoids. Note first that since every distributive $\ell$\nbd{-}monoid $\m{M}$ has a distributive lattice reduct, prime ideals of its lattice reduct exist. For a prime (lattice) ideal $I$ of a distributive $\ell$\nbd{-}monoid $\m{M}$ and $a,b\in M$, define
\[
\idq{I}{a}:=\{\langle c,d\rangle \in M \times M\mid cad \in I\}
\quad\text{and}\quad 
a \sim_I b\defiff \idq{I}{a}=\idq{I}{b}.
\]

\begin{proposition}[\cite{Mer71}]\label{prop:kerntwo}
Let $\m{M}$ be a distributive $\ell$\nbd{-}monoid and let $I$ be a prime lattice ideal of $\m M$. Then $\sim_I$ is an $\ell$\nbd{-}monoid congruence and the quotient $\m M/I := \m{M}/{\sim_I}$ is a distributive $\ell$\nbd{-}monoid. Moreover, for any $a,b\in M$, 
\[
[a]_{\sim_I} \leq [b]_{\sim_I} \iff \idq{I}{b} \subseteq \idq{I}{a}, \quad
\idq{I}{a \jn b}=\idq{I}{a} \cap \idq{I}{b}, \quad\text{and}\quad   
\idq{I}{a \mt b}=\idq{I}{a} \cup \idq{I}{b}.
\]
In particular, $\m M/I$ is totally ordered if and only if $\langle\{\idq{I}{a}\mid a\in M\},\subseteq\rangle$ is a chain.
\end{proposition}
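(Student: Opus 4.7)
The plan is to first establish the two ``distribution'' identities for $\idq{I}{\cdot}$ with respect to $\jn$ and $\mt$, and then use them to deduce everything else. The key observation is that $I$ is a (lattice) ideal, hence $x\jn y\in I \iff x\in I$ and $y\in I$, and $I$ is \emph{prime}, hence $x\mt y\in I \iff x\in I$ or $y\in I$. Using the $\ell$-monoid distributivity of $\m{M}$ applied inside the defining condition $cad\in I$, I obtain, for any $a,b\in M$,
\[
\idq{I}{a \jn b}=\idq{I}{a}\cap\idq{I}{b}\quad\text{and}\quad\idq{I}{a\mt b}=\idq{I}{a}\cup\idq{I}{b},
\]
directly from the chain of equivalences $(c,d)\in\idq{I}{a\jn b}\iff cad\jn cbd\in I\iff cad\in I$ and $cbd\in I$, and similarly for $\mt$ using primality.

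Next I would verify that $\sim_I$ is an $\ell$-monoid congruence. Reflexivity, symmetry and transitivity are immediate as $\sim_I$ is defined by equality of sets. For multiplicative compatibility, if $\idq{I}{a}=\idq{I}{b}$ and $e\in M$, then $(c,d)\in\idq{I}{ae}\iff ca(ed)\in I\iff(c,ed)\in\idq{I}{a}\iff(c,ed)\in\idq{I}{b}\iff(c,d)\in\idq{I}{be}$, so $ae\sim_I be$, and analogously on the left. Lattice compatibility is then immediate from the two identities just established: if $a\sim_I b$ and $a'\sim_I b'$, then $\idq{I}{a\jn a'}=\idq{I}{a}\cap\idq{I}{a'}=\idq{I}{b}\cap\idq{I}{b'}=\idq{I}{b\jn b'}$, and similarly for $\mt$. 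Hence $\m{M}/{\sim_I}$ inherits the structure of a distributive $\ell$-monoid from $\m{M}$, since the class of distributive $\ell$-monoids is a variety and is therefore closed under quotients by $\ell$-monoid congruences.

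For the order description, I would unwind the definition of the induced lattice order on the quotient: $[a]_{\sim_I}\le[b]_{\sim_I}\iff[a\mt b]_{\sim_I}=[a]_{\sim_I}\iff\idq{I}{a\mt b}=\idq{I}{a}\iff\idq{I}{a}\cup\idq{I}{b}=\idq{I}{a}\iff\idq{I}{b}\subseteq\idq{I}{a}$. Finally, $\m M/I$ is totally ordered if and only if for every $a,b\in M$ one has $[a]_{\sim_I}\le[b]_{\sim_I}$ or $[b]_{\sim_I}\le[a]_{\sim_I}$, which by the order description is exactly the statement that $\langle\{\idq{I}{a}\mid a\in M\},\subseteq\rangle$ is a chain.

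The proof is essentially bookkeeping; the only subtle point is the use of primality of $I$ for the $\mt$-identity (the $\jn$-identity uses only that $I$ is a lattice ideal). Once both identities are in hand, congruence, the order description, and the totality criterion all follow uniformly, so I expect no significant obstacle beyond making sure the ideal/primality hypotheses are invoked in the right places.
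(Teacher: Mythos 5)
Your proof is correct. The paper itself gives no proof of this proposition, attributing it to Merlier~\cite{Mer71}, but your argument is the natural one and every step checks out: the two identities $\idq{I}{a\jn b}=\idq{I}{a}\cap\idq{I}{b}$ and $\idq{I}{a\mt b}=\idq{I}{a}\cup\idq{I}{b}$ follow exactly as you say from $\ell$\nbd{-}monoid distributivity together with the ideal property (for $\jn$) and primality plus downward closure (for $\mt$), and once they are in hand the congruence property, the closure of the variety $\vty{DLM}$ under quotients, the order description via $[a]\le[b]\iff[a\mt b]=[a]$, and the totality criterion all follow by the bookkeeping you describe. You also correctly isolate the only genuinely load-bearing hypothesis, namely primality for the $\mt$\nbd{-}identity.
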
 

\begin{proposition}\label{p:mainembeddingdlm}
Every distributive $\ell$\nbd{-}monoid $\m M$ is a subdirect product of all the distributive $\ell$\nbd{-}monoids of the form $\m M/I$, where $I$ is a prime ideal of $\m M$. 
\end{proposition}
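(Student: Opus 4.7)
The plan is to invoke the standard subdirect representation argument: by Proposition~\ref{prop:kerntwo}, each prime lattice ideal $I$ of $\m M$ yields a surjective $\ell$\nbd{-}monoid homomorphism $\pi_I \colon \m M \to \m M/I$, and bundling these together gives a homomorphism
\[
\pi \colon \m M \to \prod_{I \text{ prime}} \m M/I,\qquad a\mapsto (\pi_I(a))_I,
\]
whose components are, by construction, surjective. So to establish subdirectness it suffices to verify that $\pi$ is injective, i.e., that for any distinct $a,b\in M$ there exists a prime lattice ideal $I$ of $\m M$ with $a\not\sim_I b$.

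First I would reduce the injectivity condition to a lattice-theoretic separation problem. If $a\neq b$, then $a\not\le b$ or $b\not\le a$; without loss of generality, $a\not\le b$, so that $a\mt b<a$ in the distributive lattice reduct of $\m M$. The principal ideal $(a\mt b]$ and principal filter $[a)$ of the lattice reduct are then disjoint, since $c\in(a\mt b]\cap[a)$ would force $a\le a\mt b\le b$. Hence the prime ideal separation theorem for distributive lattices produces a prime lattice ideal $I$ of $\m M$ with $a\mt b\in I$ and $a\notin I$.

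It remains to show that this prime ideal $I$ actually separates $a$ from $b$, which is where the identity $\idq{I}{a\mt b}=\idq{I}{a}\cup\idq{I}{b}$ from Proposition~\ref{prop:kerntwo} does the work. From $a\mt b\in I$ we get $\langle \e,\e\rangle\in\idq{I}{a\mt b}$, while from $a\notin I$ we get $\langle \e,\e\rangle\notin\idq{I}{a}$; the union identity therefore forces $\langle \e,\e\rangle\in\idq{I}{b}$, giving $\idq{I}{a}\neq\idq{I}{b}$ and hence $a\not\sim_I b$, as required.

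No step looks difficult: the congruence and quotient structure are handed to us by Proposition~\ref{prop:kerntwo}, and the only nontrivial ingredient is the classical prime ideal separation theorem in a distributive lattice, which is exactly calibrated to produce the ideal needed. The slight subtlety (and the only part worth checking carefully) is the computation in the last paragraph: one must use the specific lattice identity $\idq{I}{a\mt b}=\idq{I}{a}\cup\idq{I}{b}$ supplied by Proposition~\ref{prop:kerntwo}, rather than attempting to exhibit a direct witness $\langle c,d\rangle$ separating $\idq{I}{a}$ from $\idq{I}{b}$ by hand.
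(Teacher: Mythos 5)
Your proof is correct and follows essentially the same route as the paper: bundle the natural quotient maps from Proposition~\ref{prop:kerntwo} and prove injectivity by the prime ideal separation theorem, witnessed by the pair $\langle\e,\e\rangle$. The only difference is cosmetic --- the paper separates $a$ from $b$ directly (obtaining $a\in I$, $b\notin I$), whereas you separate $(a\mt b]$ from $[a)$ and then use the identity $\idq{I}{a\mt b}=\idq{I}{a}\cup\idq{I}{b}$ to deduce $b\in I$; both are valid.
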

\begin{proof}
Let $\mathcal{I}$ be the set of all prime lattice ideals of $\m M$. By Proposition~\ref{prop:kerntwo}, there exists a natural surjective homomorphism $\nu_I \colon \m{M} \to \m{M} /I;\: a\mapsto [a]_{\sim_I}$ for each $I \in \mathcal{I}$. Combining these maps, we obtain a homomorphism
\[
\nu\colon \m{M} \to \prod_{I \in \mathcal{I}} \m{M}/I; \quad a \mapsto (\nu_I(a))_{I \in \mathcal{I}}.
\]
It remains to show that $\nu$ is injective. Let $a,b\in M$ with $a\neq b$. By the prime ideal separation theorem for distributive lattices, there exists an $I \in \mathcal{I}$ such that, without loss of generality, $a\in I$ and $b\notin I$, yielding $\langle \e,\e\rangle \in \idq{I}{a}$ and $\langle \e,\e\rangle \notin\idq{I}{b}$. But then $\nu_I(a) \neq \nu_I(b)$ and $\nu(a) \neq \nu(b)$. So $\nu$ is a subdirect embedding.
\end{proof}

The following lemma provides a description of the prime lattice ideals $I$ of a distributive $\ell$\nbd{-}monoid $\m{M}$ such that $\m M/I$ is a totally ordered monoid.

\begin{lemma}\label{l:lin}
Let $\m{M}$ be a distributive $\ell$\nbd{-}monoid and let $I$ be a prime lattice ideal of $\m{M}$. Then $\m M/I$ is totally ordered if and only if for all $b_1,b_2,c_1,c_2,d_1,d_2 \in M$,
\[
c_1b_1c_2 \in I \:\text{ and }\:d_1b_2d_2 \in I \enspace\Longrightarrow\enspace c_1b_2c_2 \in I\:\text{ or }\:d_1b_1d_2 \in I.
\]
\end{lemma}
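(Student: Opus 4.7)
The plan is to invoke the last sentence of Proposition~\ref{prop:kerntwo}, which reduces the claim to showing that the family $\{\idq{I}{a} \mid a \in M\}$ is a chain under inclusion if and only if the displayed implication holds. Both sides of the desired biconditional are best handled by direct unfolding of the definition of $\idq{I}{a}$, and indeed both boil down, by contraposition, to essentially the same statement about witnesses.

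For the forward direction, I would assume the chain condition and take arbitrary $b_1, b_2, c_1, c_2, d_1, d_2$ such that $c_1 b_1 c_2 \in I$ and $d_1 b_2 d_2 \in I$, i.e., $\langle c_1, c_2 \rangle \in \idq{I}{b_1}$ and $\langle d_1, d_2 \rangle \in \idq{I}{b_2}$. Since $\idq{I}{b_1}$ and $\idq{I}{b_2}$ are comparable, one of the inclusions $\idq{I}{b_1} \subseteq \idq{I}{b_2}$ or $\idq{I}{b_2} \subseteq \idq{I}{b_1}$ holds; in the first case $\langle c_1, c_2 \rangle \in \idq{I}{b_2}$, giving $c_1 b_2 c_2 \in I$, while in the second case $d_1 b_1 d_2 \in I$. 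This is exactly the displayed implication.

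For the reverse direction, I would assume the implication and take arbitrary $b_1, b_2 \in M$. If $\idq{I}{b_1} \not\subseteq \idq{I}{b_2}$, then there exist $c_1, c_2$ with $c_1 b_1 c_2 \in I$ but $c_1 b_2 c_2 \notin I$. For any $\langle d_1, d_2 \rangle \in \idq{I}{b_2}$, we have $d_1 b_2 d_2 \in I$, and the implication (with the first disjunct in the conclusion excluded) forces $d_1 b_1 d_2 \in I$, whence $\langle d_1, d_2 \rangle \in \idq{I}{b_1}$. So $\idq{I}{b_2} \subseteq \idq{I}{b_1}$ and the chain condition follows.

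There is no real obstacle here: the only content is the translation between comparability of the sets $\idq{I}{b_1}, \idq{I}{b_2}$ and the stated symmetric implication, which is exactly what one gets by contraposition after unfolding $\idq{I}{a}=\{\langle c,d\rangle \mid cad \in I\}$. The proof is thus a short, purely set-theoretic argument built on top of Proposition~\ref{prop:kerntwo}.
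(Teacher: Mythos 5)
Your proposal is correct and matches the paper's proof in essence: both directions reduce the claim, via the last sentence of Proposition~\ref{prop:kerntwo}, to the comparability of the sets $\idq{I}{b_1}$ and $\idq{I}{b_2}$, and then unfold the definition of $\idq{I}{a}$. The only cosmetic difference is that you argue the reverse direction directly where the paper argues by contraposition.
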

\begin{proof}
Suppose first that $\m M/I$ is totally ordered and hence, by Proposition~\ref{prop:kerntwo}, that $\idq{I}{b_1} \subseteq \idq{I}{b_2}$ or $\idq{I}{b_2} \subseteq \idq{I}{b_1}$ for all $b_1,b_2 \in M$. Then $c_1b_1c_2 \in I$ (i.e., $\langle c_1,c_2\rangle \in \idq{I}{b_1}$) and $d_1b_2d_2 \in I$ (i.e., $\langle d_1,d_2\rangle \in \idq{I}{b_2}$) must entail $c_1b_2c_2 \in I$ (i.e., $\langle c_1,c_2\rangle \in \idq{I}{b_2}$) or $d_1b_1d_2 \in I$ (i.e., $\langle d_1,d_2\rangle\in \idq{I}{b_1}$) as required. For the converse, suppose that $\m M/I$ is not totally ordered. By Proposition~\ref{prop:kerntwo}, there exist $b_1,b_2\in M$ such that $\idq{I}{b_1} \not \subseteq \idq{I}{b_2}$ and $\idq{I}{b_2} \not \subseteq \idq{I}{b_1}$. That is, there exist $c_1,c_2,d_1,d_2 \in M$ such that $c_1b_1c_2 \in I$ and $d_1b_2d_2 \in I$, but $c_1b_2c_2 \not\in I$ and $d_1b_1d_2 \not\in I$, as required.
\end{proof}

An $\ell$\nbd{-}group or a distributive $\ell$\nbd{-}monoid is called {\em representable} if it is isomorphic to a subdirect product of members of $\vty{OG}$ or $\vty{OM}$, respectively. The following result provides a characterization of representable distributive $\ell$\nbd{-}monoids in terms of their prime lattice ideals, and an equation axiomatizing the variety of these algebras relative to $\vty{DLM}$.

\begin{proposition}\label{p:representabledlm}
The following are equivalent for any distributive $\ell$\nbd{-}monoid $\m{M}$:
\begin{enumerate}
\item [\rm (1)] $\m{M}$ is representable.
\item [\rm (2)] $\m{M}\models (x_1 \le x_2 \jn z_1y_1z_2)\,\&\,(x_1 \le x_2 \jn w_1y_2w_2)\,\Longrightarrow\, x_1 \le x_2 \jn z_1y_2z_2 \jn w_1y_1w_2$.
\item [\rm (3)] $\m{M}\models z_1y_1z_2 \mt w_1y_2w_2 \le z_1y_2z_2 \jn w_1y_1w_2$.
\item [\rm (4)] For any prime lattice ideal $I$ of $\m{M}$, the quotient $\m M/I$ is totally ordered.
\end{enumerate}
\end{proposition}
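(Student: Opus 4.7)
The plan is to establish the implications $(1) \Rightarrow (3)$, $(3) \Leftrightarrow (2)$, $(3) \Rightarrow (4)$, and $(4) \Rightarrow (1)$. Since $(3)$ is an equation, $(1) \Rightarrow (3)$ reduces to verifying $(3)$ in an arbitrary totally ordered monoid: by totality either $y_1 \le y_2$, in which case $z_1 y_1 z_2 \le z_1 y_2 z_2$ and so $z_1y_1z_2 \mt w_1y_2w_2 \le z_1y_2z_2 \le z_1y_2z_2 \jn w_1y_1w_2$, or $y_2 \le y_1$, in which case the argument is symmetric.

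For $(3) \Rightarrow (2)$ I would combine the two hypotheses of $(2)$ via distributivity:
\[
x_1 \le (x_2 \jn z_1y_1z_2) \mt (x_2 \jn w_1y_2w_2) = x_2 \jn (z_1y_1z_2 \mt w_1y_2w_2) \le x_2 \jn z_1y_2z_2 \jn w_1y_1w_2,
\]
where the last step applies~(3). Conversely, $(2) \Rightarrow (3)$ follows by instantiating $x_1 := z_1y_1z_2 \mt w_1y_2w_2$ and $x_2 := z_1y_2z_2 \jn w_1y_1w_2$, which makes both hypotheses of $(2)$ trivially true (since $x_1$ is bounded above by each of $z_1y_1z_2$ and $w_1y_2w_2$) and collapses the conclusion to~$(3)$.

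The main step is $(3) \Rightarrow (4)$. Given a prime lattice ideal $I$ of $\m M$ and elements $b_1, b_2, c_1, c_2, d_1, d_2$ with $c_1 b_1 c_2, d_1 b_2 d_2 \in I$, my plan is to apply~$(3)$ with the roles of $y_1$ and $y_2$ \emph{swapped}, that is, with $y_1 \mapsto b_2$, $y_2 \mapsto b_1$, and $(z_1, z_2, w_1, w_2) \mapsto (c_1, c_2, d_1, d_2)$. This yields
\[
c_1 b_2 c_2 \mt d_1 b_1 d_2 \le c_1 b_1 c_2 \jn d_1 b_2 d_2,
\]
whose right-hand side lies in $I$ because $I$ is closed under finite joins. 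Downward closure then forces $c_1 b_2 c_2 \mt d_1 b_1 d_2 \in I$, and by primeness either $c_1 b_2 c_2 \in I$ or $d_1 b_1 d_2 \in I$. Lemma~\ref{l:lin} then delivers that $\m M / I$ is totally ordered, establishing $(4)$.

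Finally, $(4) \Rightarrow (1)$ is immediate from Proposition~\ref{p:mainembeddingdlm}, which already represents $\m M$ as a subdirect product of the quotients $\m M / I$ indexed by prime lattice ideals $I$; under~$(4)$ each factor is totally ordered, so $\m M$ is representable. The principal obstacle is spotting the swapped instantiation of~$(3)$ that bridges the ideal-theoretic characterization from Lemma~\ref{l:lin} with the equational axiom; once that instantiation is identified, the remaining steps reduce to distributivity and the standard ideal machinery.
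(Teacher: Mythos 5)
Your proof is correct and follows essentially the same route as the paper: the swapped instantiation of (3) feeding into Lemma~\ref{l:lin}, the trivializing substitution for (2)\,$\Rightarrow$\,(3), and the appeal to Proposition~\ref{p:mainembeddingdlm} for (4)\,$\Rightarrow$\,(1) all match. The only cosmetic difference is that you close the cycle through (1)\,$\Rightarrow$\,(3) and derive (2) from (3) by distributivity, whereas the paper verifies the quasiequation (2) directly in totally ordered monoids and then specializes to (3); both variants are equally short and correct.
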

\begin{proof}
(1)\,$\Rightarrow$\,(2). Since quasiequations are preserved by taking direct products and subalgebras,  it suffices to prove that (2) holds for the case where $\m{M}$ is a totally ordered monoid. Let $a_1,a_2,b_1,b_2,c_1,c_2,d_1,d_2 \in M$ satisfy $a_1 \le a_2 \jn c_1b_1c_2$ and $a_1 \le a_2 \jn d_1b_2d_2$. Since $\m{M}$ is totally ordered, we can assume without loss of generality that $b_1 \le b_2$. It follows that $c_1 b_1c_2 \le c_1b_2c_2$ and therefore $a_1 \le a_2 \jn  c_1b_1c_2  \le  a_2 \jn  c_1b_2c_2 \le   a_2 \jn  c_1b_2c_2  \jn d_1b_1d_2$ as required.

(2)\,$\Rightarrow$\,(3). Let $s_1 := z_1y_1z_2$, $s_2 := w_1y_2w_2$, $t_1 := z_1y_2z_2$, and $t_2 := w_1y_1w_2$, and suppose that $\m{M}\models (x_1\le x_2\jn s_1) \,\&\, (x_1 \le x_2 \jn s_2)\,\Longrightarrow\, x_1 \le x_2 \jn t_1 \jn t_2$. Since $\m{M}\models s_1 \mt s_2 \le t_1 \jn t_2 \jn s_1$ and  $\m{M}\models s_1 \mt s_2 \le t_1 \jn t_2 \jn s_2$, it follows that $\m{M}\models s_1 \mt s_2\le t_1 \jn t_2$ as required.

(3)\,$\Rightarrow$\,(4). Assume (3) and suppose that $c_1b_1c_2 \in I$ and $d_1b_2d_2 \in I$ for some $b_1,b_2,c_1,c_2,d_1,d_1 \in M$. Since $I$ is a lattice ideal, $c_1b_1c_2 \jn d_1b_2d_2 \in I$. By (3) and the downwards closure of $I$, also $c_1b_2c_2 \mt d_1b_1d_2 \in I$. But then, since $I$ is prime, it must be the case that either $c_1b_2c_2 \in I$ or $d_1b_1d_2 \in I$. Hence, by Lemma~\ref{l:lin}, the quotient $\m M/I$ is totally ordered. 

(4)\,$\Rightarrow$\,(1). By (4), $\m M /I$ is totally-ordered when $I$ is a prime ideal of $\m M$, so representability follows by Proposition~\ref{p:mainembeddingdlm}.
\end{proof}

It follows directly from Propositions~\ref{p:mainembeddingdlm} and~\ref{p:representabledlm} that the class of representable distributive $\ell$\nbd{-}monoids is the variety generated by the class $\vty{OM}$ of totally ordered monoids. Similarly, it follows from these results that the class of representable $\ell$\nbd{-}groups is  the variety generated by the class $\vty{OG}$ of totally ordered groups and is axiomatized relative to $\vty{LG}$ by $z_1y_1z_2 \mt w_1y_2w_2 \le z_1y_2z_2 \jn w_1y_1w_2$. (Just observe that if the inverse-free reduct of an $\ell$\nbd{-}group $\m{L}$ is a subdirect product of totally ordered monoids, then each component is a homomorphic image of $\m{L}$ and hence a totally ordered group.) Hence, an equation is valid in these varieties if and only if it is valid in their totally ordered members.

We also obtain the following known fact:

\begin{corollary}[{\cite[Corollary 2]{Mer71}}] 
Commutative distributive $\ell$\nbd{-}monoids are representable.
\end{corollary}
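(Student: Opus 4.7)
The plan is to invoke the equivalence (1)$\Leftrightarrow$(3) in Proposition~\ref{p:representabledlm}: it suffices to verify that an arbitrary commutative distributive $\ell$-monoid $\m{M}$ satisfies the equation $z_1y_1z_2 \mt w_1y_2w_2 \le z_1y_2z_2 \jn w_1y_1w_2$. Commutativity of the monoid operation collapses the outer pairs, so that setting $a:=z_1z_2$ and $b:=w_1w_2$ the equation reduces to the four-variable inequality
\[
ay_1 \mt by_2 \;\le\; ay_2 \jn by_1.
\]

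To prove this, I would chain the following one-step manipulations, each using only monotonicity and one of the distributive laws (lattice or multiplicative):
\[
ay_1 \mt by_2 \;\le\; a(y_1\jn y_2) \mt b(y_1\jn y_2) \;=\; (a\mt b)(y_1\jn y_2) \;=\; (a\mt b)y_1 \jn (a\mt b)y_2 \;\le\; by_1 \jn ay_2.
\]
The initial inequality replaces each $y_i$ by the larger $y_1\jn y_2$; the two equalities apply right-distributivity of $\cdot$ over $\mt$ and left-distributivity of $\cdot$ over $\jn$, respectively; and the final inequality uses $a\mt b\le b$ on the first summand and $a\mt b\le a$ on the second. Once this chain is established, the implication (3)$\Rightarrow$(1) of Proposition~\ref{p:representabledlm} immediately yields that $\m{M}$ is representable.

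I do not anticipate a serious obstacle. The one subtle point is the asymmetric bounding in the final step, where $(a\mt b)y_1$ is sent up to $by_1$ rather than $ay_1$ and $(a\mt b)y_2$ to $ay_2$ rather than $by_2$; this is precisely what produces the right-hand side $ay_2\jn by_1$ and makes the whole chain close. The sketch also clarifies why the argument does not extend to the non-commutative setting: without commutativity one cannot factor the $y_i$ out to the right of both $a$ and $b$ simultaneously, so the crucial simplification $a(y_1\jn y_2)\mt b(y_1\jn y_2) = (a\mt b)(y_1\jn y_2)$ is unavailable.
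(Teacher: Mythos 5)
Your proof is correct and follows essentially the same route as the paper: both verify criterion (3) of Proposition~\ref{p:representabledlm} by using commutativity to rewrite the three-factor products as $ay_i$, $by_j$ and then applying a short chain of monotonicity and distributivity steps. The only (immaterial) difference is that the paper's chain passes through $(c_1c_2\jn d_1d_2)(b_1\mt b_2)$, joining the outer coefficients and meeting the middle factors, whereas yours dually passes through $(a\mt b)(y_1\jn y_2)$.
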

\begin{proof}
By Proposition~\ref{p:representabledlm}, it suffices to note that for any commutative distributive $\ell$\nbd{-}monoid $\m{M}$  and $b_1,b_2,c_1,c_2,d_1,d_2 \in M$, 
\begin{align*}
c_1b_1c_2 \mt d_1b_2d_2 & = c_1c_2b_1 \mt d_1d_2b_2 \\ 
& \le (c_1c_2 \jn d_1d_2)b_1 \mt (c_1c_2 \jn d_1d_2)b_2 \\ 
& = (c_1c_2 \jn d_1d_2)(b_1 \mt b_2) \\ 
& = c_1c_2(b_1 \mt b_2) \jn d_1d_2(b_1 \mt b_2) \\ 
& \le  c_1c_2b_2 \jn d_1d_2b_1 \\ 
& = c_1b_2c_2 \jn d_1b_1d_2. \qedhere
\end{align*}
\end{proof}

It is shown in~\cite{Rep83} that there are inverse-free equations that are valid in all totally ordered Abelian groups, but not in all totally ordered commutative monoids. We make use here of just one of these equations.

\begin{lemma}[{\cite[Lemma~7]{Rep83}}]\label{l:commutativedlm}
The following equation is valid in all totally ordered Abelian groups, but not in all  totally ordered commutative monoids:
\[
x_1x_2x_3 \mt x_4x_5x_6 \mt x_7x_8x_9 \le x_1x_4x_7 \jn x_2x_5x_8 \jn x_3x_6x_9.
\]
\end{lemma}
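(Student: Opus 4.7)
The plan is to prove the two assertions of the lemma separately.

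\textbf{Validity in totally ordered Abelian groups.} I would switch to additive notation and set $R_1 := x_1+x_2+x_3$, $R_2 := x_4+x_5+x_6$, $R_3 := x_7+x_8+x_9$, and analogously $C_1 := x_1+x_4+x_7$, $C_2 := x_2+x_5+x_8$, $C_3 := x_3+x_6+x_9$. Since the group is Abelian, both the $R_i$ and the $C_j$ sum to $\sum_{i=1}^9 x_i$, giving the key identity $R_1+R_2+R_3 = C_1+C_2+C_3$. Suppose for contradiction that in some totally ordered Abelian group we had $R_i > C_j$ for all $i,j \in \{1,2,3\}$. Summing the three diagonal strict inequalities $R_i > C_i$ yields $R_1+R_2+R_3 > C_1+C_2+C_3$, contradicting the identity. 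Hence the minimum of the $R_i$ is at most the maximum of the $C_j$, which is precisely the equation read in multiplicative notation.

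\textbf{Failure in totally ordered commutative monoids.} For the negative part I would exhibit a totally ordered commutative monoid $\m{M}$ together with elements $a_1,\dots,a_9 \in M$ for which the inequality fails. The argument in the first part rests on the identity $R_1+R_2+R_3 = C_1+C_2+C_3$, which in multiplicative language uses cancellativity: any cancellative commutative totally ordered monoid embeds (via its group of fractions) into a totally ordered Abelian group, where the equation holds by the first part. The desired counterexample therefore necessarily lives in a non-cancellative commutative totally ordered monoid. I would either invoke the explicit construction of Repnitski\u{\i}~\cite[Lemma~7]{Rep83} directly, or construct a small concrete example, e.g.\ a finite totally ordered commutative monoid with some absorbing or idempotent-like behaviour on certain products, chosen so that each of the three row products $a_1a_2a_3$, $a_4a_5a_6$, $a_7a_8a_9$ is strictly above each of the three column products $a_1a_4a_7$, $a_2a_5a_8$, $a_3a_6a_9$, and then verify this by a finite computation.

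The first part is a one-line averaging/diagonal-pairing argument once the sum identity is observed. The main obstacle is the second part, specifically producing a suitably non-cancellative totally ordered commutative monoid and an explicit nine-variable assignment witnessing the failure; the non-cancellativity must be arranged so that the absorbing effect favours the rows over the columns, and verifying totality of the monoid order together with compatibility with multiplication is the delicate point.
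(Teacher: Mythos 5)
The paper offers no proof of this lemma at all: it is imported verbatim from Repnitski\u{\i} \cite[Lemma~7]{Rep83}, so there is no internal argument to compare against. Your first half is correct and self-contained. Writing $R_i$ and $C_j$ for the row and column sums, the identity $R_1+R_2+R_3=C_1+C_2+C_3$ together with the fact that strict inequalities can be added in a totally ordered Abelian group (this uses translation-invariance plus cancellativity, which is precisely why the argument does not survive in monoids) makes $R_i>C_j$ for all $i,j$ impossible, and totality then gives $\min_i R_i\le\max_j C_j$. That is a clean proof of the positive half and is more than the paper supplies.

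The gap is in the second half, which is the entire nontrivial content of the lemma: you do not exhibit a counterexample, only describe what one would have to look like. Deferring to \cite{Rep83} is legitimate (the paper does exactly that), but your fallback plan --- ``construct a small concrete example with some absorbing or idempotent-like behaviour and verify by a finite computation'' --- underestimates the difficulty. For instance, the most natural non-cancellative candidate, truncated addition on $\{0,1,\dots,N\}$, provably cannot witness the failure: if every truncated column sum were strictly below every truncated row sum, with $m$ the least truncated row sum, then no column would overflow, so the true column sums would total strictly less than $3m$, while the true row sums would total at least $3m$; since rows and columns share the same untruncated total $\sum_k x_k$, this is a contradiction --- the same double counting as in your positive half. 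Analogous checks defeat the simplest monoids with an absorbing or idempotent element. So either the citation must carry the full weight of the negative half, or a genuinely deliberate construction in the style of Repnitski\u{\i} is required; it should not be presented as a routine finite search.
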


We use this result to show that the same discrepancy holds when comparing the equational theories of $\vty{OM}$ and $\vty{OG}$.

\begin{theorem}\label{t:subrvarrep}
There is an inverse-free equation that is valid in all totally ordered groups, but not in all totally ordered monoids.
\end{theorem}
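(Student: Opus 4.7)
The plan is to use Repnitskii's equation $\eps_0$ from Lemma~\ref{l:commutativedlm} as the starting point: that lemma exhibits an inverse-free $\eps_0$ valid in all totally ordered Abelian groups but failing in some totally ordered commutative monoid $M$. Since $M$ is itself a totally ordered monoid, the failure of $\eps_0$ in $M$ already witnesses $\vty{OM}\not\models\eps_0$. So one direction of the theorem is essentially free, and the goal reduces to identifying an inverse-free equation valid in all totally ordered groups that still fails in $\vty{OM}$.

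A direct use of $\eps_0$ is blocked because $\eps_0$ is not valid in all totally ordered groups: an explicit counterexample is obtainable in the Heisenberg group $H$ over $\Z$ with lexicographic order, by choosing a $3\times 3$ array of elements of the form $(a_{ij},b_{ij},0)$ whose first two coordinate matrices have zero row- and column-sums. This forces all nine row and column products into the centre of $H$, while the residual third-coordinate correction terms can be arranged so that every row product strictly exceeds every column product. So some modification of $\eps_0$ is needed.

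My plan is therefore to construct an inverse-free equation $\eps$, in the variables of $\eps_0$ together with additional auxiliary variables, with the following two properties: (a) under a suitable specialisation of the auxiliary variables (typically to $\e$) inside $M$, $\eps$ reduces to the failing instance of $\eps_0$, so $\eps$ still fails in $\vty{OM}$; and (b) $\eps$ is valid in every totally ordered group. The role of the auxiliary variables is to provide enough extra slack on the right-hand side to absorb the commutator-like obstructions displayed by the Heisenberg counterexample, which come precisely from the non-Abelian interaction between the row- and column-orderings of the variables.

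Verifying (b) is the chief obstacle. The natural route is to exploit the structure theorem for totally ordered groups: any TOG admits a chain of convex normal subgroups whose successive quotients are Archimedean, and hence Abelian by H\"older's theorem. A case analysis across this filtration, coupled with the freedom given by the auxiliary variables, should reduce the verification of $\eps$ at each Archimedean stratum to the Abelian setting, where Repnitskii's proof of $\eps_0$ in Lemma~\ref{l:commutativedlm} directly applies. Combining validity in $\vty{OG}$ with the failure in $M$ then yields the theorem.
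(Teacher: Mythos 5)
Your proposal correctly isolates the two tasks (failure in $\vty{OM}$ via Lemma~\ref{l:commutativedlm}, validity in $\vty{OG}$ of a suitably modified equation), but it stops at a plan: neither the modified equation $\eps$ nor the verification of its validity in $\vty{OG}$ is actually produced, and both are the entire content of the theorem. There is also an internal tension in the mechanism you propose for part (a): if substituting $\e$ for the auxiliary variables turns $\eps$ into $\eps_0$ \emph{as an equation}, then validity of $\eps$ in $\vty{OG}$ would be inherited by $\eps_0$ under that substitution, contradicting your own claim that $\eps_0$ fails in the ordered Heisenberg group. So the reduction to $\eps_0$ cannot be a pure substitution; it must exploit a property of the witnessing monoid (commutativity). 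The paper does exactly this, and without any new variables: it takes $t_1\mt t_2\le s_1\jn s_2$ where $t_2$ and $s_2$ are commutative rearrangements of the row terms $t_1=x_1x_2x_3\mt x_5x_4x_6\mt x_9x_7x_8$ and column terms $s_1=x_1x_4x_7\jn x_5x_2x_8\jn x_9x_6x_3$; since $t_1\eq t_2$ and $s_1\eq s_2$ hold in commutative monoids, the equation collapses to Repnitski\u{\i}'s in any totally ordered commutative monoid and hence fails there.

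For part (b), your appeal to a filtration of an arbitrary totally ordered group by convex subgroups with Archimedean (hence Abelian) jumps is not a viable route as stated: the convex subgroups are in general neither normal in the whole group nor arranged so that a fixed finite tuple of elements and their ninefold products sort neatly into strata, and there is no transfer principle reducing validity of an equation in $\vty{OG}$ to validity in Abelian quotients. The paper instead verifies validity by an explicit syntactic derivation, rewriting the goal as $\e\le(\iv{t}_1\jn\iv{t}_2)(s_1\jn s_2)$ and repeatedly applying the $\vty{LG}$-valid splitting rule $\e\le xy\jn z\,\Longrightarrow\,\e\le x\jn y\jn z$ together with the $\vty{OG}$-specific cyclic rule $\e\le xy\jn z\,\Longrightarrow\,\e\le yx\jn z$ to derive $\e\le u_1\jn\cdots\jn u_6$ for six commutator-like terms $u_i$ that are absorbed by the joinands of $\iv{t}_1s_1$ and $\iv{t}_2s_2$. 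That derivation, and the choice of permutations in $t_2,s_2$ that makes it close up, is the missing idea.
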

\begin{proof}
Consider the inverse-free equation $t_1 \mt t_2 \le s_1 \jn s_2$, where
\begin{align*}
t_1 := x_1x_2x_3 \mt x_5x_4x_6 \mt x_9x_7x_8; \quad & s_1 := x_1x_4x_7 \jn x_5x_2x_8 \jn x_9x_6x_3;\\
t_2 := x_1x_3x_2 \mt x_5x_6x_4 \mt x_9x_8x_7; \quad & s_2 := x_1x_7x_4 \jn x_5x_8x_2 \jn x_9x_3x_6.
\end{align*}
Clearly  $t_1\eq t_2$ and $s_1\eq s_2$ are valid in all totally ordered commutative monoids, so $t_1 \mt t_2 \le s_1 \jn s_2$ fails in some totally ordered monoid by Lemma~\ref{l:commutativedlm}. It remains to show that this equation, or equivalently $\e \le (\iv{t}_1 \jn \iv{t}_2)(s_1 \jn s_2)$, is valid in every totally ordered group. Recall first that (cf.~\cite[Lemma 3.3]{GM16})
\begin{align}\label{eq:split}
\vty{LG}\models\e \le xy \jn z\,\Longrightarrow\,\e \le x \jn y \jn z.
\end{align}
Since $\vty{LG}\models\e \le \e\jn x_8\iv{x}_3\iv{x}_8x_3$, it follows using (\ref{eq:split}) that
\begin{align}\label{eq:first}
\vty{LG}\models \e \le \iv{x}_3x_8x_3\iv{x}_8 \jn x_8\iv{x}_3\iv{x}_8x_3.
\end{align}
An application of (\ref{eq:split}) with (\ref{eq:first}) as premise yields
\begin{align}\label{eq:second}
\vty{LG}\models \e \le \iv{x}_3x_8\iv{x}_6x_7 \jn \iv{x}_7x_6x_3\iv{x}_8 \jn x_8\iv{x}_3\iv{x}_8x_3,
\end{align}
and then another application of (\ref{eq:split}) with (\ref{eq:second}) as premise yields
\begin{align}\label{eq:third}
\vty{LG}\models \e \le \iv{x}_3x_8\iv{x}_6x_7 \jn \iv{x}_7x_6x_3\iv{x}_8 \jn x_8\iv{x}_3x_7\iv{x}_6 \jn x_6\iv{x}_7\iv{x}_8x_3.
\end{align}
For any ordered group $\m{L}$ and $a,b,c\in L$, if $\e\le ab\jn c$, then either $\e\le c$, or $\iv{a}\le b$ and hence $\e\le ba$, so $\e\le ba\jn c$. Hence
\begin{align}\label{eq:cycle}
\vty{OG}\models\e \le xy \jn z\,\Longrightarrow\,  \e \le yx \jn z.
\end{align} 
We apply (\ref{eq:cycle}) four times with (\ref{eq:third}) as the first premise to obtain
\begin{align}\label{eq:fourth}
\vty{OG}\models\e \le x_7\iv{x}_3x_8\iv{x}_6 \jn \iv{x}_8\iv{x}_7x_6x_3 \jn \iv{x}_3x_7\iv{x}_6x_8 \jn \iv{x}_7\iv{x}_8x_3x_6.
\end{align}
For convenience, let
\begin{align*}
u_1 & := \iv{x}_3\iv{x}_2x_4x_7; & u_2 & := \iv{x}_6\iv{x}_4x_2x_8; & u_3 & := \iv{x}_8\iv{x}_7x_6x_3; \\
u_4 & := \iv{x}_2\iv{x}_3x_7x_4; & u_5 & := \iv{x}_4\iv{x}_6x_8x_2; & u_6 & := \iv{x}_7\iv{x}_8x_3x_6.
\end{align*}
An application of (\ref{eq:split}) with (\ref{eq:fourth}) as premise yields
\begin{align}\label{eq:fifth}
\vty{OG}\models\e \le  x_7\iv{x}_3\iv{x}_2x_4 \jn \iv{x}_4x_2x_8 \iv{x}_6 \jn u_3 \jn \iv{x}_3x_7\iv{x}_6x_8 \jn u_6.
\end{align}
Applying (\ref{eq:cycle}) twice with (\ref{eq:fifth}) as the first premise, we obtain
\begin{align}\label{eq:sixth}
\vty{OG}\models\e \le u_1 \jn u_2 \jn u_3 \jn \iv{x}_3x_7\iv{x}_6x_8 \jn u_6.
\end{align}
Another application of (\ref{eq:split}) with (\ref{eq:sixth}) as premise yields
\begin{align}\label{eq:seventh}
\vty{OG}\models\e \le  u_1 \jn u_2 \jn u_3 \jn \iv{x}_3x_7x_4\iv{x}_2 \jn x_2\iv{x}_4\iv{x}_6x_8 \jn u_6.
\end{align}
Applying (\ref{eq:cycle}) twice with  (\ref{eq:seventh}) as the first premise, we obtain
\begin{align}\label{eq:eighth}
\vty{OG}\models\e \le u_1\jn u_2\jn u_3\jn u_4\jn u_5\jn u_6.
\end{align}
Observe now that for some joins of group terms $u',u''$,
\[
\vty{OG}\models\iv{t}_1s_1\eq u_1 \jn u_2 \jn u_3 \jn u'
\quad\text{and}\quad
\vty{OG}\models\iv{t}_2s_2\eq u_4 \jn u_5 \jn u_6 \jn u''.
\]
Hence, since $\vty{OG}\models (\iv{t}_1 \jn \iv{t}_2)(s_1 \jn s_2)\eq \iv{t}_1s_1 \jn  \iv{t}_1s_2 \jn \iv{t}_2s_1 \jn  \iv{t}_2s_2$, by (\ref{eq:eighth}),
\begin{align*}
\vty{OG}\models\e \le (\iv{t}_1 \jn \iv{t}_2)(s_1 \jn s_2). & \qedhere
\end{align*}
\end{proof}

In~\cite{Rep83}, it is proved that the variety generated by the class of inverse-free reducts of Abelian $\ell$\nbd{-}groups is not finitely based and can be axiomatized relative to $\vty{DLM}$ by the set of inverse-free equations  $s_1 \mt \cdots \mt s_n \le t_1 \jn \cdots \jn t_n$ such that $s_1,\dots,s_n,t_1,\dots,t_n\in T_m(X)$ and $s_1\cdots s_n\eq t_1\cdots t_n$ is valid in all commutative monoids. It is not known, however, if the variety generated by the class of inverse-free reducts of totally ordered groups is finitely based. Decidability in each case of the equational theories of commutative distributive $\ell$\nbd{-}monoids, totally ordered monoids, and inverse-free reducts of totally ordered groups is also open. The following result shows, at least, that unlike $\vty{DLM}$, the varieties generated by these classes do not have the finite model property.

\begin{proposition}\label{p:fmpfails}
There is an equation that is valid in every finite totally ordered monoid, but not in $\m{Z}=\langle\Z,\min,\max,+,0\rangle$.
\end{proposition}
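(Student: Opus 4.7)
The plan is to exhibit a specific inverse-free equation $\eps$ and verify two things: that $\eps$ fails in $\m{Z}$ and that $\eps$ holds in every finite totally ordered monoid. Such an $\eps$ would exploit the following structural dichotomy. In any totally ordered monoid, the sequence of powers $a,a^2,a^3,\ldots$ of a single element is monotone by order-preservation of multiplication (nondecreasing when $a \geq \e$, nonincreasing when $a \leq \e$). In a \emph{finite} totally ordered monoid, such a monotone sequence in a finite chain must stabilize after at most $|M|$ steps, so there is some $n$ with $a^n = a^{n+1}$. By contrast, in $\m{Z}$ the sequence $n \cdot a$ is strictly monotone and unbounded for every nonzero $a$. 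Any equation that implicitly forces some eventual stabilization of a power sequence is therefore a candidate.

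Verification would proceed in two parts. To check failure in $\m{Z}$, I would choose a concrete integer substitution, typically involving at least one strictly negative value, and compute both sides of $\eps$ using the identifications of $\wedge$ with $\min$, $\vee$ with $\max$, and the monoid operation with addition; the substitution is picked so that, thanks to an unbounded term on the left, the left-hand side strictly exceeds the right-hand side. To check validity in an arbitrary finite totally ordered monoid $\m{M}$, I would take an arbitrary assignment of the variables of $\eps$ to elements of $M$ and perform a case analysis on how each assigned element sits relative to $\e$ in the total order. In each case, I would combine the order-preservation of multiplication, the distributivity of the monoid operation over $\wedge$ and $\vee$, and the eventual constancy of the monotone sequence of powers (ensured by finiteness of $M$) to argue that each term appearing on the left-hand side is dominated by one of the terms appearing on the right.

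The main obstacle is isolating the right equation. No candidate of the form $a^n = a^{n+1}$ for a fixed $n$ will do, since such an equation would hold only in finite totally ordered monoids of size bounded by $n$, and would trivially fail in arbitrarily large finite ones; the equation must therefore encode stabilization in a way that is uniform in the cardinality of $M$. This essentially forces $\eps$ to involve an interaction between at least two variables combined with a small number of iterated powers, meets, and joins, so that the stabilization property of finite monoids can be exploited without having to fix a specific periodicity bound. The delicate part of the argument is showing that the uniform case-analytic verification in an arbitrary finite totally ordered monoid goes through; the failure in $\m{Z}$ is typically a one-line arithmetic check once the equation is in hand.
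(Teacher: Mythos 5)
You have correctly identified the mechanism that makes such an equation exist --- in a finite totally ordered monoid every power sequence $a, a^2, a^3, \ldots$ is monotone and must stabilize, whereas in $\m{Z}$ the multiples of a nonzero element are strictly monotone and unbounded --- and this is precisely the dichotomy the paper exploits. But the proposition is an existence statement whose entire content is the exhibition of a concrete equation, and your proposal stops exactly at the point where the work begins: you acknowledge that ``the main obstacle is isolating the right equation'' and never isolate one. Without a specific $\eps$, neither the failure in $\m{Z}$ nor the validity in finite totally ordered monoids can be checked, so as it stands this is a plan, not a proof.

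For comparison, the paper uses $xy^2 \leq \e \jn x^2y^3$, which fails in $\m{Z}$ under $x \mapsto -3$, $y \mapsto 2$ (giving $1 > 0$). The verification in a finite totally ordered monoid $\m{M}$ is not the case analysis on the position of each element relative to $\e$ that you sketch; it is a proof by contradiction. Assuming $ab^2 > \e$ and $ab^2 > a^2b^3$, one shows by induction that $ab^2 > a^{2+n}b^{3+n}$ for all $n$ (substituting the hypothesis into itself) and that $a^nb^{2n} \geq ab^2$ for all $n \geq 1$ (using $ab^2 > \e$), and then finiteness gives some $n$ with $a^{n+1}=a^n$ and $b^{n+1}=b^n$, whence $ab^2 > a^{2+n}b^{3+n} = a^nb^n = a^nb^{2n} \geq ab^2$, a contradiction. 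Note that in a totally ordered monoid $a \le b \jn c$ reduces to $a \le b$ or $a \le c$, so the contradiction format is the natural one; your proposed strategy of splitting on whether each variable is above or below $\e$ would be considerably messier and is not obviously sufficient, since the relevant interaction is between the two variables jointly, not their individual signs. You would need to supply both the equation and this (or an equivalent) verification to complete the argument.
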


\begin{proof}
Consider the equation $xy^2 \leq e\jn x^2y^3$. Note that $\mathbf{Z}\not\models xy^2 \leq e\jn x^2y^3$, since $(-3) + 2 + 2 = 1 > 0 = 0 \jn ((-3) + (-3) + 2 + 2 + 2)$. We show that this equation holds in every finite totally ordered monoid $\mathbf{M}$. Suppose towards a contradiction that $ab^2 > e \jn a^2b^3$ for some $a,b\in M$, i.e., $ab^2>e$ and $ab^2 > a^2b^3$. 

Observe first that, inductively, $ab^2 > a^{2+n}b^{3+n}$ for each $n\in \N$. The base case $n=0$ holds by assumption, and for $n>0$, assuming $ab^2 > a^{2+n-1}b^{3+n-1}$ yields $ab^2>a^2b^3 = a(ab^2)b \geq a( a^{2+n-1}b^{3+n-1})b = a^{2+n}b^{3+n}$. Also, inductively, $a^nb^{2n} \geq ab^2$ for each $n\in\N^{>0}$. The base case $n=1$ is clear, and for $n>1$, assuming $a^{n-1}b^{2n-2} \geq ab^2$ yields (recalling that $ab^2 > e$),
\[
a^nb^{2n} = a^{n-1}(ab^2)b^{2n-2} \geq a^{n-1}eb^{2n-2}= a^{n-1}b^{2n-2} \geq ab^2.
\]
Finally, since $\bf M$ is finite and totally ordered, $a^{n+1} = a^n$ and $b^{n+1} = b^n$ for some $n\in\mathbb{N}$. But then  $ab^2 > a^{2+n}b^{3+n}  =a^nb^n =  a^nb^{2n} \geq ab^2$, a contradiction. 
\end{proof}

\begin{corollary}\label{c:fmpfails}
The variety of commutative distributive $\ell$\nbd{-}monoids and varieties generated by the classes of totally ordered monoids and inverse-free reducts of totally ordered groups do not have the finite model property. 
\end{corollary}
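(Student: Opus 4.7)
The plan is to use the equation $\varepsilon := (xy^2 \leq \e \jn x^2y^3)$ furnished by Proposition~\ref{p:fmpfails} as a single witness to the failure of the finite model property in all three varieties at once. First I would verify that $\m{Z}$ lies inside each of these varieties: as a commutative totally ordered monoid that is the inverse-free reduct of a totally ordered group, it belongs to the variety of commutative distributive $\ell$\nbd{-}monoids, to the variety generated by $\vty{OM}$, and to the variety generated by inverse-free reducts of totally ordered groups. Since Proposition~\ref{p:fmpfails} asserts that $\varepsilon$ fails in $\m{Z}$, the only task remaining is to show that $\varepsilon$ is nevertheless valid in every finite member of each of the three varieties.

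The key point is that every finite member $\m L$ of these varieties is isomorphic to a subdirect product of finitely many finite totally ordered monoids. Each of the three varieties consists of representable distributive $\ell$\nbd{-}monoids: the commutative case is Merlier's corollary, the second case is the content of Proposition~\ref{p:representabledlm}, and the third case reduces to the second because every inverse-free reduct of a totally ordered group is itself a totally ordered monoid, so the third variety is a subvariety of the one generated by $\vty{OM}$. Combining Proposition~\ref{p:mainembeddingdlm} with item~(4) of Proposition~\ref{p:representabledlm}, I would express $\m L$ as a subdirect product of the quotients $\m L/I$ as $I$ ranges over prime lattice ideals of $\m L$, each such quotient being totally ordered. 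Finiteness of $\m L$ ensures that only finitely many such prime ideals occur and that every quotient $\m L/I$ is finite.

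To finish, Proposition~\ref{p:fmpfails} gives $\varepsilon$ in every finite totally ordered monoid, so $\varepsilon$ holds in the finite direct product of the $\m L/I$ and therefore in the subalgebra $\m L$. This shows that every finite member of each of the three varieties satisfies $\varepsilon$, whereas $\m Z$ does not, establishing the failure of the finite model property in each case. I do not foresee a genuine obstacle; the only step that merits explicit attention is the passage from representability to a finite subdirect decomposition with finite totally ordered factors, which rests on the observation that a finite distributive $\ell$\nbd{-}monoid has only finitely many prime lattice ideals and that validity of a single equation is preserved under finite direct products and subalgebras.
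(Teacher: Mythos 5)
Your proposal is correct and is essentially the argument the paper intends: the corollary is stated without an explicit proof precisely because it follows from Proposition~\ref{p:fmpfails} together with the representability results (Merlier's corollary, Proposition~\ref{p:representabledlm}, and Proposition~\ref{p:mainembeddingdlm}) in just the way you describe, with $\m{Z}$ lying in all three varieties and every finite member decomposing subdirectly into finite totally ordered factors. The one step you flag for attention — that the subdirect factors of a finite algebra are finite — is indeed the only detail worth noting, and it is immediate since each factor $\m{L}/I$ is a homomorphic image of $\m{L}$.
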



\bibliographystyle{plain}


\end{document}